\documentclass[12pt]{article}
\usepackage{}

\usepackage{epsfig}
\usepackage{latexsym}
\usepackage{caption}
\usepackage{amsfonts}

\usepackage{amssymb}
\usepackage{mathrsfs}
\usepackage{amsmath}
\usepackage{enumerate}
\usepackage{graphicx}
\usepackage{MnSymbol}
\usepackage{float}
\usepackage{pict2e}
\usepackage{enumerate}
\usepackage{enumitem}

\usepackage{graphics}
\usepackage{MnSymbol}
\usepackage{tikz}
\usepackage{cite}
\usepackage{ulem}
\usepackage{extarrows}
\usepackage{cases}
\usepackage{tabu}

\usepackage{tikz-3dplot}
\usepackage{asymptote}

\normalem 
\numberwithin{equation}{section}

\usepackage[pdfauthor={derajan},pdftitle={How to do this},pdfstartview=XYZ,bookmarks=true,
colorlinks=true,linkcolor=blue,urlcolor=blue,citecolor=blue,pdftex,bookmarks=true,linktocpage=true,  hyperindex=true]{hyperref}


\usepackage{color}

\setlength{\textheight}{8.5in} \setlength{\textwidth}{6.2in}
\setlength{\oddsidemargin}{0in} \setlength{\parindent}{1em}

\makeatletter

\newcommand{\Rmnum}[1]{\expandafter\@slowromancap\romannumeral #1@}

\makeatother

\begin{document}

\newtheorem{theorem}{Theorem}
\newtheorem{observation}[theorem]{Observation}
\newtheorem{corollary}[theorem]{Corollary}
\newtheorem{algorithm}[theorem]{Algorithm}
\newtheorem{definition}[theorem]{Definition}
\newtheorem{guess}[theorem]{Conjecture}
\newtheorem{claim}{Claim}
\newtheorem{problem}[theorem]{Problem}
\newtheorem{question}[theorem]{Question}
\newtheorem{lemma}[theorem]{Lemma}
\newtheorem{proposition}[theorem]{Proposition}
\newtheorem{fact}[theorem]{Fact}

\captionsetup[figure]{labelfont={bf},name={Fig.},labelsep=period}
\makeatletter
  \newcommand\figcaption{\def\@captype{figure}\caption}
  \newcommand\tabcaption{\def\@captype{table}\caption}
\makeatother

\newtheorem{acknowledgement}[theorem]{Acknowledgement}

\newtheorem{axiom}[theorem]{Axiom}
\newtheorem{case}[theorem]{Case}
\newtheorem{conclusion}[theorem]{Conclusion}
\newtheorem{condition}[theorem]{Condition}
\newtheorem{conjecture}[theorem]{Conjecture}
\newtheorem{criterion}[theorem]{Criterion}
\newtheorem{example}[theorem]{Example}
\newtheorem{exercise}[theorem]{Exercise}
\newtheorem{notation}{Notation}
\newtheorem{solution}[theorem]{Solution}
\newtheorem{summary}[theorem]{Summary}

\newenvironment{proof}{\noindent {\bf
Proof.}}{\rule{3mm}{3mm}\par\medskip}
\newcommand{\remark}{\medskip\par\noindent {\bf Remark.~~}}
\newcommand{\pp}{{\it p.}}
\newcommand{\de}{\em}
\newcommand{\mad}{\rm mad}
\newcommand{\qf}{Q({\cal F},s)}
\newcommand{\qff}{Q({\cal F}',s)}
\newcommand{\qfff}{Q({\cal F}'',s)}
\newcommand{\f}{{\cal F}}
\newcommand{\ff}{{\cal F}'}
\newcommand{\fff}{{\cal F}''}
\newcommand{\fs}{{\cal F},s}

\newcommand{\wrt}{with respect to }
\newcommand{\q}{\uppercase\expandafter{\romannumeral1}}
\newcommand{\qq}{\uppercase\expandafter{\romannumeral2}}
\newcommand{\qqq}{\uppercase\expandafter{\romannumeral3}}
\newcommand{\qqqq}{\uppercase\expandafter{\romannumeral4}}
\newcommand{\qqqqq}{\uppercase\expandafter{\romannumeral5}}
\newcommand{\qqqqqq}{\uppercase\expandafter{\romannumeral6}}

\newcommand{\qed}{\hfill\rule{0.5em}{0.809em}}

\newcommand{\var}{\vartriangle}

\title{{\large \bf Multiple list colouring of $3$-choice critical graphs}}

\author{Rongxing Xu\thanks{Department of Mathematics, Zhejiang Normal University,  China. E-mail:
xurongxing@yeah.net. Grant Number: NSFC 11871439. Supported also by
China Scholarship Council  and partially supported by the ANR project HOSIGRA (ANR-17-CE40-0022).}, \and  Xuding Zhu\thanks{Department of Mathematics, Zhejiang Normal University,  China.  E-mail: xdzhu@zjnu.edu.cn. Grant Number: NSFC 11971438. Supported also by
the 111 project of The Ministry of Education of China.}  }

\maketitle

\begin{abstract}
A graph $G$ is called $3$-choice critical if $G$ is not $2$-choosable but any proper subgraph  is $2$-choosable. A  characterization of $3$-choice critical graphs was given by Voigt in [On list Colourings and Choosability of Graphs, Habilitationsschrift, Tu Ilmenau(1998)]. Voigt conjectured that if $G$ is a bipartite $3$-choice critical graph, then $G$ is $(4m, 2m)$-choosable for every  integer $m$. This conjecture was disproved by  Meng, Puleo and Zhu in [On (4, 2)-Choosable Graphs, Journal of Graph Theory 85(2):412-428(2017)]. They  showed that if $G=\Theta_{r,s,t}$ where $r,s,t$ have the same parity and $\min\{r,s,t\} \ge 3$, or  $G=\Theta_{2,2,2,2p}$ with $p \ge 2$, then $G$ is bipartite $3$-choice critical, but   not $(4,2)$-choosable. On the other hand,
all the other bipartite 3-choice critical graphs are $(4,2)$-choosable.  
This paper strengthens the result of Meng, Puleo and Zhu and shows that all the other bipartite $3$-choice critical graphs are $(4m,2m)$-choosable for every integer $m$. 
\end{abstract}

\section{Introduction}

An {\em $a$-list assignment} of a graph $G$ is a mapping $L$ which assigns to each vertex $v$ of $G$ a set $L(v)$ of $a$ colours. A {\em $b$-fold coloring} of $G$ is a  mapping $\phi$  which assigns to each vertex $v$ of $G$ a  set $\phi(v)$ of $b$ colors such that for every edge $uv$, $\phi(u) \cap \phi(v) = \emptyset$.
An{ \em $(L,b)$-colouring} of $G$ is a $b$-fold coloring $\phi$ of $G$ such that $\phi(v) \subseteq L(v)$  for each vertex $v$.
We say $G$ is {\em $(a,b)$-choosable} if for any $a$-list assignment $L$ of $G$, there is an $(L,b)$-colouring of $G$.
We say $G$ is {\em  $a$-choosable } if $G$ is $(a,1)$-choosable. 
The concept of list colouring of graphs was introduced independently by Erd\H{o}s, Rubin and Taylor \cite{ERT} and Vizing \cite{Vizing1976} in the 1970's. Since then, list colouring of graphs has attracted considerable attention and becomes an important branch of chromatic graph theory.

Erd\H{o}s, Rubin and Taylor \cite{ERT} characterized all the $2$-choosable graphs. Given a graph $G$, the {\em core} of $G$ is obtained from $G$ by repeatedly removing degree $1$ vertices. Denote by $\Theta_{k_1,k_2,\ldots, k_q}$   the graph consisting of internally vertex disjoint paths of lengths $k_1,k_2, \ldots, k_q$ connecting two vertices $u$ and $v$. Erd\H{o}s, Rubin and Taylor proved that a graph $G$ is $2$-choosable if and only if the core of $G$ is $K_1$ or an even cycle or $\Theta_{2,2,2p}$ for some positive integer $p$. 

We say a graph $G$ is   {\em  $3$-choice-critical}  if $G$ is not $2$-choosable and any proper subgraph of $G$ is $2$-choosable.
In 1998, Voigt characterized all the $3$-choice-critical graphs.

\begin{theorem}[\cite{Voigt1998}]
\label{3-choice-critical}
	A graph is 3-choice-critical if and only if it is one of the following:
	\begin{enumerate}
		\item An odd cycle.
		\item Two vertex-disjoint even cycles joined by a path.
		\item Two even cycles with one vertex in common. 
		\item  $\Theta_{2r,2s,2t}$  with $r\geq 1$, and $s, t>1$, or  
			  $\Theta_{2r+1,2s+1,2t+1}$ with $r\ge 0$, $s,t > 0$.
		\item $\Theta_{2,2,2,2t}$ graph with $t\geq 1$.
	\end{enumerate}
\end{theorem}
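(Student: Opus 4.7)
\emph{Plan.} The theorem is an equivalence, and I would prove each direction by reducing to the Erd\H{o}s--Rubin--Taylor characterization of $2$-choosable graphs (every vertex of $G$ has degree $\geq 2$ in the core, and $G$ is $2$-choosable iff its core is $K_1$, an even cycle, or some $\Theta_{2,2,2p}$). For the forward direction, I would verify directly that each of the five families (1)--(5) is $3$-choice critical. Non-$2$-choosability is immediate: each listed graph has no degree-$1$ vertex, so it equals its own core, and inspection shows that none of those cores is $K_1$, an even cycle, or $\Theta_{2,2,2p}$, so by Erd\H{o}s--Rubin--Taylor none is $2$-choosable. Every proper subgraph is $2$-choosable because deleting any edge or vertex and then iteratively suppressing degree-$1$ vertices leaves a core that is either $K_1$, an even cycle, or $\Theta_{2,2,2p}$: for example, deleting an edge of a path in $\Theta_{2r,2s,2t}$ leaves an even cycle (since any two of $2r,2s,2t$ sum to an even number), and deleting an edge of $\Theta_{2,2,2,2t}$ leaves either $\Theta_{2,2,2t}$ or an even cycle. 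This direction is a routine case check.

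\emph{Reverse direction: reductions.} Let $G$ be $3$-choice critical. I would first establish three basic reductions. (a) $G$ has no degree-$1$ vertex: if $v$ is a leaf with unique neighbour $u$, then $G-v$ is a proper subgraph and hence $2$-choosable, and any colouring of $G-v$ under the restriction of a $2$-list $L$ extends to $v$ by picking from $L(v)\setminus\{\phi(u)\}$, which is nonempty since $|L(v)|=2$; this would make $G$ itself $2$-choosable, contradiction. (b) $G$ is connected: otherwise some component $H$ is not $2$-choosable, and deleting any vertex from a different component leaves a proper subgraph still containing $H$, hence not $2$-choosable. (c) Since $G$ now equals its own core and is not $2$-choosable, Erd\H{o}s--Rubin--Taylor tells us $G$ is not $K_1$, not an even cycle, and not $\Theta_{2,2,2p}$.

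\emph{Structural case analysis.} I would then split on the block structure of $G$. If $G$ is $2$-regular it is a single cycle, and since even cycles are $2$-choosable it must be odd: family (1). If $G$ has a cut vertex $v$, decompose $G=G_1\cup G_2$ with $V(G_1)\cap V(G_2)=\{v\}$; each $G_i$ is a proper subgraph and therefore $2$-choosable, so by Erd\H{o}s--Rubin--Taylor the core of each $G_i$ is $K_1$, an even cycle, or $\Theta_{2,2,2p}$. Because $G$ has no degree-$1$ vertex, each nontrivial $G_i$ must contribute a cycle or theta through $v$; a careful analysis of how the two cores meet at $v$, combined with the observation that attaching a tree at a single vertex of a $2$-choosable graph preserves $2$-choosability, forces the only surviving configurations to be two even cycles joined by a bridge path (family (2)) or two even cycles sharing $v$ (family (3)); the $\Theta_{2,2,2p}$ option is excluded because it would import additional structure that prevents $G$ from being minimal. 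If $G$ is $2$-connected with a vertex of degree at least $3$, then $G$ contains a generalized theta subgraph, and I would argue $G$ is itself a generalized theta graph $\Theta_{k_1,\ldots,k_q}$: any additional edge or chord could be deleted while still leaving an obstruction to $2$-choosability in what remains, contradicting criticality.

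\emph{Main obstacle.} The delicate step is pinning down $q$ and the parities of the lengths $k_i$ in the theta case. Here I would combine the fact that $G$ is not $2$-choosable (so $G\neq\Theta_{2,2,2p}$) with the fact that every $G-e$ is $2$-choosable (so the core of $G-e$ lies in Erd\H{o}s--Rubin--Taylor's short list). A parity and cardinality check on the cores of $G-e$ as $e$ ranges over the different paths forces $q\le 4$; for $q=3$ the three lengths must share a common parity, and the further requirement that no proper sub-theta already fails $2$-choosability sharpens the ranges to those in family (4); for $q=4$ a similar argument shows that the only surviving pattern is $\Theta_{2,2,2,2t}$, since any other four-path configuration has a proper sub-theta outside $\Theta_{2,2,2p}$. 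This interlocking parity-and-subgraph analysis, tightly coupled to the Erd\H{o}s--Rubin--Taylor list, is the technical core of the proof.
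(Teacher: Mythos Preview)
The paper does not prove this theorem; it is quoted with attribution to Voigt's habilitation thesis and used only as background for the subsequent work on $(4m,2m)$-choosability. There is therefore no proof in the paper to compare your proposal against.

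For what it is worth, your outline follows the natural route---reduce both directions to the Erd\H{o}s--Rubin--Taylor characterization of $2$-choosable graphs and perform a structural case analysis---and the overall architecture is sound. The places that would need real work to make rigorous are the ones you yourself flag: showing that a $2$-connected $3$-choice-critical graph with a vertex of degree $\ge 3$ must itself be a generalized theta (not merely contain one), and the interlocking parity/subgraph argument that pins down $q\le 4$ and the exact length patterns in families (4) and (5). Those steps are not obvious and constitute the substantive content of Voigt's original proof.
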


Except the odd cycle, all the other $3$-choice-critical graphs  are bipartite. 
In \cite{Voigt1998}, Voigt conjectured that every bipartite $3$-choice -critical graph $G$  is $(2m,m)$-choosable for every even integer $m$. This conjecture is true if $G=\Theta_{2,2,2,2}$ \cite{TuzaVoigt1996}.
However,  Meng, Puleo and Zhu \cite{4choose2} proved that  if $min\{r,s,t\} \geq 3$,  $r,s,t$ have the same parity, then $\Theta_{r,s,t}$ is not $(4,2)$-choosable, and if $t \geq 2$, then $\Theta_{2,2,2,2t}$ is not $(4,2)$-choosable.
Nevertheless, the other bipartite 3-choice-critical graphs are $(4,2)$-choosable  \cite{4choose2}.
 It was conjectured by Erd\H{o}s, Rubin and Taylor \cite{ERT} that every $(a,b)$-choosable graph is $(am,bm)$-choosable. This conjecture was refuted recently by Dvo\v{r}\'{a}k, Hu and Sereni \cite{DHS} who proved that for any integer $k \ge 4$, there exists a $k$-choosable graph which is not $(2k,2)$-choosable. 
On the other hand, 
it was proved by Tuza and Voigt \cite{TuzaVoigt1996-2choosable} that if $G$ is $2$-choosable, then $G$ is $(2m,m)$-choosable for any positive integer $m$.
A natural question is whether all the $(4,2)$-choosable  $3$-choice critical graphs  are $(4m,2m)$-choosable for all integer $m$. In this paper, we answer this question in affirmative.
  
\begin{theorem}
	\label{thm-main}
	If $G$ consists of two vertex disjoint even cycles joined by a path or
	two   even cycles  intersecting at a single vertex, or $G = \Theta_{r,s,t}$ and  $r \leq 2$, $s,t>2$ and $r,s,t$ have the same parity, then $G$ is $(4m,2m)$-choosable for every integer $m$. 
\end{theorem}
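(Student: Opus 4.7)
For each of the three structural families in the theorem, my plan is to produce the desired $(L,2m)$-colouring by decomposing $G$ along its cut-structure into even cycles and paths, colouring these pieces via the theorem of Tuza and Voigt~\cite{TuzaVoigt1996-2choosable} (every $2$-choosable graph is $(2b,b)$-choosable; applied with $b=2m$, every even cycle and every path is $(4m,2m)$-choosable), and then patching the colourings together at the cut vertices. The main technical ingredient will be an auxiliary lemma stating that an even cycle $C$ with a list assignment $L'$ satisfying $|L'(x)|\ge 2m$ at one distinguished vertex~$x$ and $|L'(v)|\ge 4m$ at every other vertex is $(L',2m)$-colourable, provided the restricted list at $x$ meets a mild non-degeneracy condition (e.g.\ that $L'(x)$ is not exactly the union of two particular forbidden sets coming from a worst-case $C_4$-type configuration). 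A greedy argument alone does not suffice for this lemma, since even for $C_4$ one can construct $4m$-lists where an arbitrary prescribed $\phi(x)$ leaves no feasible extension, so the lemma must track a slightly richer combinatorial invariant.

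\textbf{Execution in the three cases.} In Case~1 (two disjoint even cycles $C_1,C_2$ joined by a path $a_0a_1\cdots a_k$), I first apply Tuza-Voigt to $C_1$ to fix $\phi(a_0)$, then greedily extend along the joining path (each interior residual list has size at least $2m$), and finally finish $C_2$ by invoking the auxiliary lemma at $a_k$. In Case~2 (two even cycles sharing a vertex $v$), I split $L(v)=A\sqcup B$ into two $2m$-sets and show that at least one of the choices $\phi(v)=A$ or $\phi(v)=B$ extends to both cycles simultaneously via the auxiliary lemma applied once at $v$ in each~$C_i$. In Case~3 ($G=\Theta_{r,s,t}$ with $r\le 2$, $s,t>2$ and all of the same parity), let $u,v$ be the degree-$3$ vertices; I select $\phi(u)$ and $\phi(v)$ jointly, subject to $\phi(u)\cap\phi(v)=\emptyset$ when $r=1$ and to $|L(w)\cap(\phi(u)\cup\phi(v))|\le 2m$ when $r=2$ (where $w$ is the unique internal vertex of the length-$2$ path), and then apply the auxiliary lemma at $u$ and at $v$ on each of the two remaining even cycles through $\{u,v\}$.

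\textbf{Main obstacle.} The hardest step is expected to be Case~3 with $r=2$: the constraint imposed by $w$ essentially forces $\phi(u)$ and $\phi(v)$ to be nearly equal in colour content, severely restricting the admissible pairs, and this must remain compatible with the auxiliary lemma's non-degeneracy hypothesis at $u$ and at $v$ on both the $s$-cycle and the $t$-cycle simultaneously. Showing that such a compatible pair $(\phi(u),\phi(v))$ always exists will likely require a refined version of the auxiliary lemma that also controls how $\phi(u)\cap\phi(v)$ interacts with the two cycle lists, and is where I expect the bulk of the case analysis to live.
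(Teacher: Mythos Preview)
Your high-level decomposition is the same as the paper's: remove the high-degree vertices, precolour them, and extend to the residual paths. But the patching mechanism you propose will not work as stated, and the gap is not in Case~3 but already in Cases~1 and~2.

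In Case~1 you want to colour $C_1$ first by Tuza--Voigt, which fixes $\phi(a_0)$ with no control whatsoever, extend greedily to obtain $\phi(a_k)$, and then invoke your auxiliary lemma on $C_2$ at $a_k$. But the auxiliary lemma (as you yourself note) requires a non-degeneracy condition on the prescribed $2m$-set, and nothing in your procedure guarantees that the particular $\phi(a_k)$ you end up with satisfies it. In Case~2 the problem is sharper: splitting $L(v)=A\sqcup B$ and hoping one of the two halves works for both cycles is not enough. One can check (using the paper's damage calculus) that $A$ and its complement $B$ cannot both be bad for the \emph{same} odd path, but it is entirely possible that $A$ is bad for the path coming from $C_1$ while $B$ is bad for the path coming from $C_2$, so neither half extends to both cycles. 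Two candidate sets are simply too few.

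What the paper does instead is a counting argument. It gives an exact criterion (Lemma~\ref{42lemma}) for when a path with reduced end-lists is $(L,2m)$-colourable, in terms of the quantity $S_L(P)$, and measures the cost of a precolouring $(S,T)$ by the \emph{damage} $dam_{L,P}(S,T)=S_L(P)-S_{L\ominus(S,T)}(P)$. The key combinatorial lemma (Lemma~\ref{main-lemma}, a binomial-sum inequality) then shows that strictly fewer than $\tfrac12\binom{4m}{2m}$ of the $2m$-subsets of any $4m$-set are bad for a given odd path. Cases~1 and~2 follow by pigeonhole: with fewer than half bad on each side, some $S$ is simultaneously good. For Case~3 the counting is more delicate because the two precolourings $S\subset L(u)$ and $T\subset L(v)$ must be chosen jointly for three paths at once; the paper handles this by an induction (Theorem~\ref{main-second-part-stronger}) that successively strips couples $c_jc'_j$ from $L(u)\times L(v)$ according to whether they are heavy, light or safe for each path, again relying on Lemma~\ref{main-lemma} to bound the number of bad simple pairs. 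Your vague ``mild non-degeneracy condition'' is exactly what the damage framework makes precise, and the work lies in proving that enough choices avoid it---not in picking one and hoping.
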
   

The {\em strong fractional choice number}  of a graph $G$ studied in \cite{JZ2019,LZ2019,Zhu2018} is defined as 
$$ch_f^*(G)= \inf\{r \in \mathbb{R}: G \text{ is $(a,b)$-choosable for any $a,b$ for which   $a/b \ge r$}\}.$$
As a consequence of Theorem \ref{thm-main}, every $(4,2)$-choosable $3$-choice critical graph $G$ has $ch_f^*(G)=2$. It remains an open problem whether every bipartite $3$-choice critical graph $G$ has $ch_f^*(G) =2$.

\section{Proof of Theorem \ref{thm-main}}

The idea of the proof of Theorem \ref{thm-main} is the following: Assume $G$ is a graph as in Theorem \ref{thm-main} and $L$ is a  $4m$-list assignment  of $G$.  Let $H$ be the set of vertices of $G$ of degree at least $3$. Then $G-H$ is the disjoint union of a family of two or three paths,  where each end vertex of these paths has exactly one neighbour in $H$ unless the path consists of a single vertex $w$, in which case $w$ has two neighbours in $H$,
and other vertices of the paths have no neighbour in $H$.

We shall assign a set of $2m$ colours to each vertex in $H$. 
Then extend this pre-colouring of $H$ to an $(L, 2m)$-colouring of the remaining vertices of $G$, that consists of two or three paths.

The extension to the paths are independent to each other.  
The question in concern becomes the following: Assume $P$ is a path  with vertices $v_1, v_2, \ldots, v_n$ in order and  $L$ is a  $4m$-list assignment on $P$. Assume $S,T$ are the $2m$-sets of colours assigned to the neighbours of $v_1$ and $v_n$ in $H$ respectively (note that the neighbours of $v_1$ and $v_n$ maybe the same, in that case, $S=T$). Under what condition, we can find an $(L,2m)$-colouring of $P$ so that the end vertices of $P$ avoid the colours from $S$ and $T$, respectively?
A sufficient condition for the existence of such  an extension  to a $2m$-fold colouring of the whole path was given in \cite{4choose2}. We shall use this condition  to show that there exists appropriate $(L,2m)$-colouring of $H$ so that the colouring can be extended to all the paths in $G-H$. This is the same idea used in \cite{4choose2}.

\begin{definition} 
	\label{def-slp1}
Assume $P$ is an $n$-vertex path with vertices $v_1, v_2, \ldots, v_n$ in order.
For a list assignment $L$ of $P$, 
Let 
\begin{equation*}
\begin{array}{rl}  
X_1 =& L(v_1),   \\
X_i =& L(v_i)-X_{i-1}, i \in \{2,3,\ldots, n\},\\
S_L(P)=&\sum_{i=1}^{n}|X_i|.
\end{array}
\end{equation*}
\end{definition}

The following lemma was proved in \cite{4choose2}.  
\begin{lemma}[ \cite{4choose2}]
	\label{42lemma}
	Let $P$ be an $n$-vertex path and let $L$ be a list assignment on $P$.
	If $|L(v_1)|$, $|L(v_n)| \geq 2m$ and  {$|L(v_i)|= 4m$} for $i \in \{2,3,\ldots, n-1\}$, then path $P$ is $(L,2m)$-colourable if and only if $S_L(P) \geq 2nm$.
\end{lemma}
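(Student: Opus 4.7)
The plan is to decompose each $\phi(v_i)$ using $L(v_i) = X_i \sqcup (L(v_i) \cap X_{i-1})$ (for $i \geq 2$, with $X_0 := \emptyset$): write $\phi(v_i) = Y_i \sqcup Z_i$ with $Y_i = \phi(v_i) \cap X_i$ and $Z_i = \phi(v_i) \cap X_{i-1}$ (taking $Z_1 = \emptyset$), so $|Y_i| + |Z_i| = 2m$. The key observation is that once this decomposition is in place, $\phi(v_{i-1}) \cap \phi(v_i) = \emptyset$ breaks into two disjointness conditions: $(Y_{i-1}, Z_i)$ inside $X_{i-1}$, and $(Z_{i-1}, Y_i)$ coupling $X_{i-2}$ with $X_i$. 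The remaining pairs are automatically disjoint because $X_{i-1} \cap X_i = \emptyset$.

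\emph{Necessity.} Assuming $\phi$ exists, the disjointness $Y_{i-1} \cap Z_i = \emptyset$ inside $X_{i-1}$ gives $|Y_{i-1}| + |Z_i| \leq |X_{i-1}|$, i.e., $|X_{i-1}| \geq 2m + |Y_{i-1}| - |Y_i|$ for $2 \leq i \leq n$. Summing over $i$ telescopes to $\sum_{j=1}^{n-1} |X_j| \geq 2(n-1)m + |Y_1| - |Y_n|$; using $|Y_1| = 2m$ and $|Y_n| \leq |X_n|$ then yields $S_L(P) \geq 2nm$.

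\emph{Sufficiency.} Conversely, assuming $S_L(P) \geq 2nm$, I would first fix cardinalities $y_i := |Y_i|$ via $y_1 := 2m$ and $y_i := \max(0, y_{i-1} + 2m - |X_{i-1}|)$. The hypothesis $|L(v_j)| = 4m$ for $2 \leq j \leq n-1$ gives $|X_j| \geq 4m - |X_{j-1}|$, which together with $y_{i-1} \leq 2m$ inductively yields $y_i \leq \min(2m, |X_i|)$ for interior $i$; the reverse of the telescoping, combined with $S_L(P) \geq 2nm$, forces $y_n \leq |X_n|$. Then I would realise the cardinalities by sets $Y_i \subseteq X_i$ and $Z_i \subseteq (L(v_i) \cap X_{i-1}) \setminus Y_{i-1}$ of the prescribed sizes, and set $\phi(v_i) := Y_i \cup Z_i$.

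The \emph{main obstacle} will be guaranteeing the second disjointness $Z_{i-1} \cap Y_i = \emptyset$, since $Z_{i-1} \subseteq X_{i-2}$ and $Y_i \subseteq X_i$ can overlap. My plan is to perform the choices with a one-step look-ahead: pick $Z_{i-1}$ within $(L(v_{i-1}) \cap X_{i-2}) \setminus Y_{i-2}$ to avoid $X_i$ whenever possible, and choose $Y_{i-1}$ so that $(L(v_i) \cap X_{i-1}) \setminus Y_{i-1}$ still has size $\geq 2m - y_i$ to accommodate $Z_i$. The quantitative slack provided by $y_i + (2m - y_{i-1}) \leq |X_i|$, valid for interior $i$ because $|X_i| \geq 4m - |X_{i-1}|$ and $y_{i-1} \leq 2m$, is what ensures $X_i \setminus Z_{i-1}$ retains room for $Y_i$ even in the tightest case; the boundary index $i = n$ uses the global hypothesis $S_L(P) \geq 2nm$ directly.
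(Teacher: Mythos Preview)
The paper does not give its own proof of this lemma; it is quoted from \cite{4choose2} without argument, so there is nothing in the present paper to compare against. Let me assess your proposal on its own.

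Your necessity argument is clean and correct. The decomposition $\phi(v_i) = Y_i \sqcup Z_i$ with $Y_i \subseteq X_i$, $Z_i \subseteq X_{i-1}$, together with the telescoping of $|X_{i-1}| \geq 2m + |Y_{i-1}| - |Y_i|$, yields $S_L(P) \ge 2nm$ exactly as you say.

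The sufficiency direction, however, has a genuine gap precisely at the obstacle you flag. Your slack inequality $y_i + (2m - y_{i-1}) \leq |X_i|$ does hold for interior $i$ with $y_i > 0$ (and the case $y_i = 0$ is vacuous), so the crude estimate $|X_i \setminus Z_{i-1}| \ge |X_i| - |Z_{i-1}| \ge y_i$ really does place $Y_i$ for $2 \le i \le n-1$. But at $i = n$ you lose the bound $|X_n| \ge 4m - |X_{n-1}|$, and the global hypothesis $S_L(P)\ge 2nm$ only delivers $y_n \le |X_n|$, not $y_n + (2m - y_{n-1}) \le |X_n|$. Your one-step look-ahead does not rescue this. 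Concretely, take $m=1$, $n=3$, $|L(v_1)| = 4$, $L(v_2) = L(v_1)$ (so $X_2 = \emptyset$), and $L(v_3) \subset L(v_1)$ with $|L(v_3)| = 2$ (so $X_3 = L(v_3)$). Then $S_L(P) = 6 = 2nm$ and $(y_1,y_2,y_3) = (2,0,2)$. Your step-$1$ look-ahead only asks that $|(L(v_2) \cap X_1) \setminus Y_1| \ge 2m - y_2 = 2$, which every $2$-subset $Y_1 \subseteq X_1$ satisfies. If $Y_1$ happens to be chosen disjoint from $L(v_3)$, the domain for $Z_2$ is $X_1 \setminus Y_1$ of size exactly $2$, forcing $Z_2 = X_1 \setminus Y_1 \supseteq X_3$; then $X_3 \setminus Z_2 = \emptyset$ and there is no room for $Y_3$. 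A successful construction here requires $Y_1 = L(v_3)$, i.e.\ information about $X_3$ already when choosing $Y_1$ --- a two-step look-ahead, not one. So the plan as written does not close the obstacle; you need a stronger invariant (for example, maintain that $Z_i$ can always be taken disjoint from $X_{i+1}$, which forces a constraint on $Y_{i-1}$ relative to $L(v_i)\cap X_{i+1}$), or reorganise the sweep so that the endpoint is handled before the slack runs out.
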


\begin{definition}
	 Assume $L$ is a $4m$-list assignment on $P$ and $S,T$ are two colour sets.
	 Let  $L\ominus(S,T)$   be the list assignment obtained 
	 from $L$ by deleting all colours in $S$ from $L(v_1)$, all colours in $T$ from $L(v_n)$, and leaving all other lists unchanged.
	 The damage of $(S,T)$ with respect to $L$ and $P$ is   defined as 
	 $$dam_{L,P}(S,T)=S_L(P)-S_{L\ominus(S,T)}(P).$$ 
\end{definition}

So to prove that $P$ has an $2m$-fold $L\ominus(S,T)$-colouring, it suffices to show that 
$$S_L(P) - dam_{L,P}(S,T) \ge 2nm.$$
For this purpose, a few lemmas were proved in \cite{4choose2} that give lower bounds for $S_L(P)$ and upper bounds for $dam_{L,P}(S,T)$.

\begin{definition}[\cite{4choose2}]
	\label{def-slpAX1Xn}
	Assume $n$ is an odd integer, $P$ is an $n$-vertex path with vertices $v_1, v_2, \ldots, v_n$ in order, and $L$ is a list assignment on $P$.
Let 
\begin{equation*}
\begin{array}{rl} 
\Lambda= &\mathop{\bigcap}\limits_{x\in V(P)}L(x), \\
\hat{X}_1 =& \{c \in L(v_1)-\Lambda: \mbox{the smallest index $i$ for which $c \notin L(v_i)  $ is even} \},   \\
\hat{X}_n =& \{c \in L(v_n)-\Lambda: \mbox{the largest index $i$ for which $c \notin L(v_i)  $ is even}\}.
\end{array}
\end{equation*}
\end{definition}

\begin{lemma}[\cite{4choose2}]
	\label{slp-dam} 
	Let $L$ be a list assignment on an $n$-vertex path $P$, where $n$ is odd. For  any sets of colours $S,T$,  
	$$dam_{L,P}(S,T)=|\hat{X}_1\cap S|+|\hat{X}_n\cap T|+|\Lambda \cap(S\cup T)|.$$
\end{lemma}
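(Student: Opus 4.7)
The plan is to prove the damage formula by decomposing $dam_{L,P}(S,T)$ as a sum of contributions, one per colour. What makes this decomposition work is that the forward recursion $X_i=L(v_i)-X_{i-1}$ acts independently on each colour: writing $\chi_i(c)=\mathbf{1}[c\in L(v_i)]$ and $\xi_i(c)=\mathbf{1}[c\in X_i]$, one obtains $\xi_1(c)=\chi_1(c)$ and $\xi_i(c)=\chi_i(c)(1-\xi_{i-1}(c))$ for $i\ge 2$, so the sequence $(\xi_i(c))_i$ depends only on $(\chi_i(c))_i$ and not on the other colours. Consequently $S_L(P)=\sum_c\sum_i \xi_i(c)$, and $dam_{L,P}(S,T)=\sum_c d(c)$, where $d(c)$ is the drop in $c$'s contribution caused by deleting $c$ from $L(v_1)$ (if $c\in S$) and from $L(v_n)$ (if $c\in T$).

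Next I would establish the block picture of $(\xi_i(c))$: within each maximal block of consecutive indices where $\chi_i(c)=1$, the values $\xi_i(c)$ alternate $1,0,1,0,\ldots$ starting with $1$, and $\xi_i(c)=0$ wherever $\chi_i(c)=0$. So a block of length $\ell$ contributes $\lceil \ell/2\rceil$ to $c$'s total, and shortening a block by removing one of its endpoints changes that contribution by $1$ when $\ell$ is odd and by $0$ when $\ell$ is even. Deleting $c$ from $L(v_1)$ shortens only the initial block, and deleting $c$ from $L(v_n)$ shortens only the terminal block. Translating parity into the language of the lemma: for $c\notin\Lambda$, if $i_0$ is the least index with $c\notin L(v_{i_0})$ then the initial block has length $i_0-1$, which is odd iff $i_0$ is even, iff $c\in\hat{X}_1$; and symmetrically for the terminal block and $\hat{X}_n$. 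Since $c\notin\Lambda$ forces at least one zero in $\chi(c)$, the initial and terminal blocks are disjoint, so the two endpoint deletions act on non-overlapping portions of the sequence and the two damages simply add; hence $d(c)=\mathbf{1}[c\in\hat{X}_1\cap S]+\mathbf{1}[c\in\hat{X}_n\cap T]$ for every $c\notin\Lambda$.

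The one case that needs separate treatment, and the step I expect to require real care, is $c\in\Lambda$. Here $\chi(c)$ is the all-ones sequence of length $n$, so the initial and terminal blocks coincide with a single block of odd length $n$, whose contribution is $(n+1)/2$. Removing $c$ from just $L(v_1)$, or just $L(v_n)$, leaves a block of length $n-1$ with contribution $(n-1)/2$, a damage of $1$; removing $c$ from both endpoints simultaneously leaves a block of length $n-2$, still odd because $n$ is odd, whose contribution is again $(n-1)/2$, so the combined damage is $1$ rather than $2$. This is the unique place where the odd-$n$ hypothesis enters, and it is exactly why the formula reads $|\Lambda\cap(S\cup T)|$ instead of $|\Lambda\cap S|+|\Lambda\cap T|$. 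Summing $d(c)$ over all colours then yields $|\hat{X}_1\cap S|+|\hat{X}_n\cap T|+|\Lambda\cap(S\cup T)|$, as claimed.
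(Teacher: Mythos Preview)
Your proof is correct. The colour-by-colour decomposition is sound because the recursion $X_i=L(v_i)\setminus X_{i-1}$ is indeed separable over colours, and your block analysis correctly computes the per-colour damage; in particular, your treatment of the $c\in\Lambda$ case is exactly right and identifies where the odd-$n$ hypothesis is used.

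As for comparison: the paper does not actually prove this lemma. It is quoted verbatim from \cite{4choose2} (Meng, Puleo, Zhu), so there is no in-paper argument to set yours against. Your write-up therefore supplies a self-contained proof where the paper relies on a citation. The argument in \cite{4choose2} proceeds along essentially the same lines (per-colour analysis of how the alternating $X_i$-membership pattern changes when endpoints are deleted), so you have reconstructed the intended proof.
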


\begin{lemma}[\cite{4choose2}]
	\label{first lower bound for SLP}
If $L$ is a list assignment on an $n$-vertex path $P$, where $n$ is odd and $|L(v_i)|=4m$ for all $i$, then $$S_{L}(P)\ge \max\{  2(n-1)m+ |\hat{X}_1|+|\hat{X}_n|+|\Lambda|, 2(n+1)m\}.$$
\end{lemma}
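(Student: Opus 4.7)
The plan is to prove the two lower bounds inside the $\max$ separately. For the bound $S_L(P) \ge 2(n+1)m$, I would use the pairing $X_1, (X_2, X_3), (X_4, X_5), \ldots, (X_{n-1}, X_n)$, valid because $n$ is odd. Since $X_i = L(v_i) \setminus X_{i-1}$ implies $|X_i| \ge |L(v_i)| - |X_{i-1}| = 4m - |X_{i-1}|$, each consecutive pair sums to at least $4m$. Combined with $|X_1| = 4m$ and $(n-1)/2$ such pairs, we obtain $4m + \tfrac{n-1}{2}\cdot 4m = 2(n+1)m$.

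For the other bound, I would switch to the pairing $(X_1, X_2), (X_3, X_4), \ldots, (X_{n-2}, X_{n-1}), X_n$. Here the count is exact rather than inequality: $|X_{2k-1}| + |X_{2k}| = 4m + |X_{2k-1} \setminus L(v_{2k})|$, since $|X_{2k}| = 4m - |L(v_{2k}) \cap X_{2k-1}|$. The second key identity is $|X_n| = |\Lambda| + |\hat{X}_n|$. To establish it, I would trace the recursion $X_i = L(v_i) \setminus X_{i-1}$ to see that $c \in X_n$ iff the maximal run of indices in $I_c = \{i : c \in L(v_i)\}$ containing $n$ begins at an odd index. If the run begins at $1$, then $c \in \Lambda$; otherwise it begins at some odd $a \ge 3$, which forces $c \notin L(v_{a-1})$ with $a-1$ even, placing $c$ in $\hat{X}_n$. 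The two cases are disjoint and exhaustive. Assembling these gives
\[
S_L(P) = 2(n-1)m + \sum_{k=1}^{(n-1)/2} |X_{2k-1} \setminus L(v_{2k})| + |\Lambda| + |\hat{X}_n|.
\]

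It remains to show $\sum_{k=1}^{(n-1)/2} |X_{2k-1} \setminus L(v_{2k})| \ge |\hat{X}_1|$. For each $c \in \hat{X}_1$, let $2k_c$ be the smallest index at which $c$ fails to appear in the list; by the definition of $\hat{X}_1$ this index is even. A short parity induction along the recursion shows that because $c \in L(v_i)$ for every $i < 2k_c$, we have $c \in X_1, X_3, \ldots, X_{2k_c - 1}$. In particular $c \in X_{2k_c - 1}$, and by the choice of $k_c$ we have $c \notin L(v_{2k_c})$, so $c \in X_{2k_c - 1} \setminus L(v_{2k_c})$. Double counting over $c \in \hat{X}_1$ then yields $\sum_k |X_{2k-1} \setminus L(v_{2k})| \ge |\hat{X}_1|$, which completes the second bound.

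The main obstacle is the parity bookkeeping: the identity $|X_n| = |\Lambda| + |\hat{X}_n|$ and the claim that every $c \in \hat{X}_1$ survives into an odd-indexed $X$-set both hinge on analyzing which maximal runs of $I_c$ place $c$ into which $X_i$, and on reading off the parity of the first or last index missing from $I_c$ encoded in $\hat{X}_1$ and $\hat{X}_n$. Once these structural statements are in place, the two lower bounds fall out by the elementary pairing arguments above.
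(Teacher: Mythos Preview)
The paper does not give its own proof of this lemma; it is quoted verbatim from \cite{4choose2} and used as a black box. Your argument is a correct self-contained proof, and the parity bookkeeping you flag as the main obstacle is handled cleanly: the identification $X_n=\Lambda\sqcup\hat X_n$ follows exactly as you say from tracing the run of indices containing $n$, and the injection $c\mapsto(k_c,c)$ with $c\in X_{2k_c-1}\setminus L(v_{2k_c})$ gives $\sum_k|X_{2k-1}\setminus L(v_{2k})|\ge|\hat X_1|$. Nothing is missing.
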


The following is a key lemma for the proof in this paper.

\begin{lemma}
	\label{main-lemma}
	 Let $m, \ell$ and $\tau$ be fixed  integers,  where $m\geq 1$, $0 \leq \ell \leq 4m$, $0 \leq \tau \leq 2m-2$, $\ell +\tau \geq 2m+2$, and both $\ell$ and $\tau$ are even. Assume $x,y$ are non-negative integers with $x+y \le \ell$. 
	  Let 
	 $$F(x,y)=\sum \binom{x}{a}\binom{y}{b}\binom{\ell-x-y}{2m-\tau-a-b},$$
 where  the summation is over all  non-negative integer pairs $(a,b)$ for which  $0 \leq a \leq x$, $0 \leq b \leq y$, $a+b \le 2m-\tau$ and  $2a+b \geq \max\{2x+y+2m+1-\ell-\tau, 2m+1-\tau\}$. Then 
 $$ F(x,y) \leq \frac{1}{2}\binom{\ell}{2m-\tau}-1.$$
\end{lemma}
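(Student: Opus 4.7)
The plan is to reinterpret $F(x,y)$ combinatorially as a tail count, and to bound it by a weight-reducing injection plus two unmatched ``surplus'' subsets that account for the ``$-1$'' strengthening.

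Set $k = 2m-\tau$, so $k\ge 2$ and $\ell\ge k+2$. Partition a ground set of size $\ell$ into three blocks $X, Y, Z$ of sizes $x, y, \ell-x-y$, and for $S\subseteq[\ell]$ set $w(S)=2|S\cap X|+|S\cap Y|$. Vandermonde's identity gives
\[
F(x,y)=\#\bigl\{S\subseteq[\ell]:|S|=k,\ w(S)\ge M\bigr\},\qquad \binom{\ell}{k}=\#\bigl\{S\subseteq[\ell]:|S|=k\bigr\},
\]
where $M=k+1+\max\{0,\,2x+y-\ell\}$. Thus the target inequality is equivalent to showing that the low-weight count $\#\{S:|S|=k,\,w(S)\le M-1\}$ exceeds $F(x,y)$ by at least $2$.

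I would construct a weight-reducing injection $\phi$ as follows. Given $S$ with $w(S)\ge M$, first note that $S\cap X\neq\emptyset$ (else $w(S)\le|S\cap Y|\le k<M$), and $Z\setminus S\neq\emptyset$: since $a\le x$ one obtains $a+b\ge M-x\ge k+x+y-\ell+1$ in both sub-regimes of $M$, which forces $|S\cap Z|\le\ell-x-y-1$. To land squarely in the low-weight region rather than simply decreasing $w$ by $2$, I would use a \emph{canonical batch swap}: let $\delta=\lceil(w(S)-M+1)/2\rceil$ and replace the $\delta$ largest-indexed elements of $S\cap X$ by the $\delta$ largest-indexed elements of $Z\setminus S$. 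A counting check using $b\le k\le M-1$ together with the lower bound on $|S\cap Z|$ shows that $\delta$ elements are always available on both sides. This produces $w(\phi(S))\in\{M-2,M-1\}$, so $\phi(S)$ is low-weight. Injectivity is enforced by the extremal rule: both the removed $X$-elements and the inserted $Z$-elements occupy the top of their respective index sets in $\phi(S)$, which lets us recover $\delta$ and $S$.

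To secure the $-2$ surplus, I would exhibit two explicit low-weight $k$-subsets outside the image of $\phi$. When $\ell-x-y\ge k$, the natural choices are $S_1=\{z_1,\ldots,z_k\}$ and $S_2=\{z_1,\ldots,z_{k-1},z_{k+1}\}$ (replacing $z_{k+1}$ by $y_1$ if $y\ge 1$), both of weight at most $1<M-1$; neither arises from $\phi$ because neither contains the top $Z$-elements that $\phi$ always inserts. In the degenerate regime $\ell-x-y<k$, the surplus subsets are produced instead using the evenness of $\ell$ and $\tau$ together with $\ell+\tau\ge 2m+2$, which leaves at least two ``unfilled'' configurations in $Y\cup Z$ that the batch-swap image cannot reach.

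The main obstacle is the injectivity of $\phi$: because the number of swaps $\delta$ depends on $w(S)$ and must be read off from $\phi(S)$ alone, the extremal choice of which $X$- and $Z$-elements to exchange must be arranged so that both the magnitude of $\delta$ and the preimage are uniquely recoverable. A secondary difficulty is the bifurcation between $2x+y\le\ell$ and $2x+y>\ell$, which change $M$ and thus the number of swaps; both cases are handled uniformly by the bound $a+b\ge M-x$, but the surplus-subset constructions in the boundary regime $\ell-x-y<k$ require a short case analysis exploiting the hypothesized parities and the slack $\ell+\tau\ge 2m+2$.
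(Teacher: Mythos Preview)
Your combinatorial reinterpretation of $F(x,y)$ as a tail count for the weight $w(S)=2|S\cap X|+|S\cap Y|$ is correct and is a natural starting point. However, the batch-swap map $\phi$ you describe is \emph{not} injective, and this is a genuine gap, not a matter of bookkeeping. Take $\ell=8$, $k=2m-\tau=2$, $x=4$, $y=0$, $z=4$; then $M=3$, and the high-weight $k$-sets are exactly the six pairs $\{x_i,x_j\}\subseteq X$. For each such $S$ you have $\delta=1$, you remove the largest-indexed element of $S\cap X$, and you insert the largest-indexed element of $Z\setminus S=Z$, namely $z_4$. Thus $\{x_1,x_2\}$ and $\{x_1,x_3\}$ both map to $\{x_1,z_4\}$. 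Your recovery rule (``the removed $X$-elements and inserted $Z$-elements occupy the top of their index sets in $\phi(S)$'') correctly recovers $\delta=1$ and the inserted $z_4$, but it cannot distinguish which $X$-element was removed, because that information has simply been erased. The same phenomenon recurs whenever several high-weight sets share the same $X$-part after deletion; in general there is no room to encode the identity of the removed $X$-elements using only the $\delta$ inserted $Z$-elements.

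For comparison, the paper does not attempt a global injection. It first proves a monotonicity lemma: for fixed $x$, the function $y\mapsto F(x,y)$ is increasing on $[0,\ell-2x]$ and decreasing thereafter, so the maximum over $y$ is attained at $y=\ell-2x$ when $x<\ell/2$ and at $y=0$ when $x\ge\ell/2$. In the first case this forces $|Z|=|X|$, and the involution swapping the roles of $X$ and $Z$ sends weight $t$ to $4k-t$ (paper's $k$, i.e.\ $2k=2m-\tau$), giving an exact symmetry of the weight distribution about $2k$; the bound then follows from $C(2k,x)\ge 2$. In the second case a direct computation with $y=0$ yields the same conclusion. The point is that the symmetry-based injection only works after reducing to $|X|=|Z|$; your attempt to bypass that reduction by a variable-length swap does not preserve enough information to be reversible. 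If you want to pursue the injection route, you would need either to first reduce to a symmetric situation (which is essentially the paper's argument) or to design a genuinely information-preserving encoding of the removed $X$-block inside $\phi(S)$; the latter does not appear to be available in general, since $|Z|$ can be much smaller than $|X|$. The surplus-subset argument for the ``$-1$'' is also incomplete in the regime $\ell-x-y<k$, but that is secondary to the failure of injectivity.
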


Note that when $a > x$ or $b > y$, then ${x \choose a}{y \choose b} = 0$.  Also $a+b \le 2m-\tau$ and 
$2a+b \geq  2x+y+2m-\tau+1-\ell$ implies that 
$2x+y \le \ell +\tau -1 + 2a+b \le \ell  +2m-\tau-1$. 
Thus the summation can be restricted to $0 \le a \le x, 0 \le b \le y$, $a+b \leq 2m-\tau$ and $2x+y \le \ell +2m-\tau-1$.

The proof of Lemma \ref{main-lemma} will be given in next section. In the rest of the section, we will prove Theorem \ref{thm-main}. In Section \ref{sec-first-half},  we will prove the first half of Theorem \ref{thm-main}: If $G$ is a graph consisting of two edge-disjoint even cycles $E$ and $F$ connected by a path $Q$ (possibly $Q$ is a single vertex path), then $G$ is $(4m,2m)$-choosable for all positive $m$. In Section  \ref{sec-first-second}, we prove the second half of Theorem \ref{thm-main}: $\Theta_{r,s,t}$ is $(4m,2m)$-choosable if $r,s,t$ have
the same parity and $r \leq 2$, $s,t > 2$. 
 
\subsection{Proof of the first part of Theorem \ref{thm-main}}
\label{sec-first-half}

 \begin{definition}
 	\label{bad(S,S)}
 	Assume $P$ is a path and $L$ is a $4m$-list assignment for $P$, $S,T$ are two $2m$-sets of colours. We say $S$ is {bad with respect to $(L,P)$}  if $dam_{L,P}(S,S) >  S_L(P) -   2nm$.  
 \end{definition}

\begin{lemma}
	\label{half-bad}
	Assume $P$ is a path with an odd number of vertices, $L$ is a $4m$-list assignment on $P$, and $W$ is a set of $4m$ colours. Then $W$ has less than $\frac{1}{2}\binom{4m}{2m}$ bad $2m$-subsets with respect to $(L,P)$. 
\end{lemma}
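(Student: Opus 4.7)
The plan is to reduce this to Lemma \ref{main-lemma} applied with $\ell=4m$ and $\tau=0$. Start by unfolding the bad condition via Lemma \ref{slp-dam}: a $2m$-subset $S$ of $W$ is bad iff $|\hat{X}_1\cap S|+|\hat{X}_n\cap S|+|\Lambda\cap S|>S_L(P)-2nm$. Because $S\subseteq W$, I may restrict attention to $A=\hat{X}_1\cap W$, $B=\hat{X}_n\cap W$, and $C=\Lambda\cap W$; by Definition \ref{def-slpAX1Xn}, $\hat{X}_1$ and $\hat{X}_n$ are disjoint from $\Lambda$, so $A$ and $B$ are disjoint from $C$.

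The key combinatorial observation is that elements of $A\cap B$ contribute twice to the damage:
$$|A\cap S|+|B\cap S|+|C\cap S|=2|(A\cap B)\cap S|+|((A\triangle B)\cup C)\cap S|,$$
since $(A\triangle B)\cap C=\emptyset$. I set $x=|A\cap B|$, $y=|(A\triangle B)\cup C|$, $a=|(A\cap B)\cap S|$, and $b=|((A\triangle B)\cup C)\cap S|$. Then $x+y=|A\cup B\cup C|\le|W|=4m$, and the left-hand side of the bad inequality equals $2a+b$.

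For the right-hand side, Lemma \ref{first lower bound for SLP} combined with $|\hat{X}_1|+|\hat{X}_n|+|\Lambda|\ge|A|+|B|+|C|=2x+y$ gives
$$S_L(P)-2nm\ge\max\{2x+y-2m,\ 2m\},$$
so every bad $S$ must satisfy $2a+b\ge\max\{2x+y-2m+1,\ 2m+1\}$, which is precisely the summation constraint of $F(x,y)$ in Lemma \ref{main-lemma} with $\ell=4m$, $\tau=0$. Since the number of $2m$-subsets of $W$ realising a prescribed pair $(a,b)$ is $\binom{x}{a}\binom{y}{b}\binom{4m-x-y}{2m-a-b}$, the number of bad $S$ is bounded above by $F(x,y)$. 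The hypotheses of Lemma \ref{main-lemma} are met: $\ell=4m$ and $\tau=0$ are even, $0\le\tau\le 2m-2$ and $\ell+\tau=4m\ge 2m+2$ hold for $m\ge 1$, and $x+y\le\ell$. Thus $F(x,y)\le \tfrac12\binom{4m}{2m}-1<\tfrac12\binom{4m}{2m}$, which is the desired bound.

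The only real obstacle is spotting the correct weighting: colours in $\hat{X}_1\cap\hat{X}_n$ must be recognised as the weight-two class, while $\Lambda$ together with the symmetric difference $\hat{X}_1\triangle\hat{X}_n$ forms the weight-one class; and the two terms inside the max of Lemma \ref{first lower bound for SLP} match exactly the two terms inside the max of Lemma \ref{main-lemma}. Once this bookkeeping is identified, the proof is a direct application of Lemma \ref{main-lemma} with no further case analysis.
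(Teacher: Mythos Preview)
Your proof is correct and follows essentially the same approach as the paper: partition $W$ according to membership in $\hat{X}_1\cap\hat{X}_n$ (weight two), $(\hat{X}_1\triangle\hat{X}_n)\cup\Lambda$ (weight one), and the remainder (weight zero), then bound the number of bad $2m$-subsets by $F(x,y)$ and invoke Lemma~\ref{main-lemma} with $\ell=4m$, $\tau=0$. The only difference is notational---the paper names the three blocks $X,Y,Z$ directly, whereas you first set $A=\hat{X}_1\cap W$, $B=\hat{X}_n\cap W$, $C=\Lambda\cap W$ and then derive the same partition---but the substance is identical.
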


\begin{proof} 
	Let $\Lambda,\hat{X}_1,\hat{X}_n$ be calculated for $P$ as in Definition \ref{def-slpAX1Xn}. Let $X=\hat{X}_1\cap \hat{X}_n\cap W $, $Y=[(\hat{X}_1\Delta\hat{X}_n)\cup \Lambda]\cap W$ ($\Delta$ means symmetric difference) and $Z=W-Y-X$. 
	
	Assume $S$ is a bad subset of $W$ with respect to $(L,P)$. 
	
 	 Let $A=S\cap X$, $B=S\cap Y$ and $C= S\cap Z$, see Figure \ref{W&S}. Let $|X|=x$, $|Y|=y$, $|Z|=z$, $|A|=a$, $|B|=b$ and $|C|=c$.

	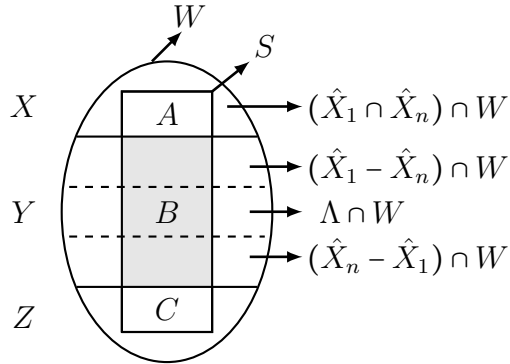
\begin{figure}[H]
	\centering
		\begin{tikzpicture}[>=latex]
		\draw [line width =0.8pt](1.4,2)ellipse(1.4 and 2);
		\draw [line width =0.8pt](0.2,1)--(2.6,1);
		\draw [line width =0.8pt](0.2,3)--(2.6,3);
				\draw [line width =0.8pt,dashed](0.1,1.67)--(2.7,1.67);
				\draw [line width =0.8pt,dashed](0.1,2.33)--(2.7,2.33);
		\filldraw [line width =0.8pt, fill opacity=0](0.8,3)--(0.8,3.6)--(2,3.6)--(2,3);
		\filldraw [line width =0.8pt,draw opacity=0, fill opacity=0.1](0.8,3)--(0.8,1)--(2,1)--(2,3)--(0.8,3);
		\filldraw [line width =0.8pt, fill opacity=0](0.8,1)--(0.8,0.4)--(2,0.4)--(2,1);
		\filldraw [line width =0.8pt,draw opacity=1, fill opacity=0](0.8,0.4)--(0.8,3.6)--(2,3.6)--(2,0.4)--(0.8,0.4);\
		\draw [line width =1pt,->](2.2,3.4)--(3.2,3.4);
			\draw [line width =1pt,->](2,3.6)--(2.5,4);
		\draw [line width =1pt,->](2.5,2.6)--(3.2,2.6);
		\draw [line width =1pt,->](2.5,2)--(3.2,2);
		\draw [line width =1pt,->](2.5,1.4)--(3.2,1.4);
		\draw [line width =1pt,->](1.2,4)--(1.6,4.4);
	
		\node at (4.6,3.4){$(\hat{X}_1\cap\hat{X}_n)\cap W$};
		\node at (4.6,2.6){$(\hat{X}_1-\hat{X}_n)\cap W$};
		\node at (4,2){$\Lambda\cap W$};
		\node at (4.6,1.4){$(\hat{X}_n-\hat{X}_1)\cap W$};
			\node at (-0.5,3.4){$X$};
			\node at (-0.5,2){$Y$};
			\node at (-0.5,0.6){$Z$};
				\node  at (1.4,3.3){$A$};
				\node  at (1.4,2){$B$};
				\node  at (1.4,0.7){$C$};
		\node at (1.7,4.6){$W$};
		\node at (2.7,4.2){$S$};	
		\end{tikzpicture}
		\caption{$W, X,Y,Z$ and $S, A, B,C$}
		\label{W&S}
	\end{figure}

Since $W$ is the disjoint union of $X, Y,Z$ and $S$ is the disjoint union of $A,B,C$, we have 
\begin{eqnarray*}
	 x+y+z=4m, \ \ a+b+c =2m.
\end{eqnarray*}
 
As 
	\begin{equation*}
	\begin{array}{lll}
	|\hat{X}_1|+|\hat{X}_n|+|\Lambda| & =    & |\hat{X}_1 \cup \hat{X}_n|+|\hat{X}_1 \cap \hat{X}_n|+|\Lambda|\\
	& \geq & |(\hat{X}_1 \cup \hat{X}_n)\cap W|+|(\hat{X}_1\cap \hat{X}_n)\cap W|+|\Lambda \cap W|\\
	& = & |(\hat{X}_1 \Delta\hat{X}_n)\cap W|+2|(\hat{X}_1\cap \hat{X}_n)\cap W|+|\Lambda \cap W|\\
	&  =   & 2x+ y,
	\end{array}
	\end{equation*}
by   Lemma \ref{first lower bound for SLP}, we have $S_{L}(P)  \geq \max\{  2x+ y+2(n-1)m, 2(n+1)m\}$.	
By Lemma \ref{slp-dam},   	$$dam_{L,P}(S,S) = |\hat{X}_1\cap S|+|\hat{X}_n\cap S|+|\Lambda \cap S|
=|(\hat{X}_1 \Delta\hat{X}_n)\cap S|+2| (\hat{X}_1 \cap \hat{X}_n ) \cap S|+|\Lambda \cap S|=2a+b.$$	
As $
dam_{L,P}(S,S) > S_{L}(P) -2nm$, we conclude that 
\begin{equation*}
\label{2b+c geq max}
2a+b  \geq  \max\{  2x+ y-2m+1, 2m+1\}.
\end{equation*}

Thus the number of bad $2m$-subsets of $W$ with respect to $(L,P)$ is at most

 \begin{equation*}
  F(x,y)= \sum \binom{x}{a}\binom{y}{b}\binom{4m-x-y}{2m-a-b},
  \end{equation*}
  where the summation is over all pairs of integers $(a,b)$ with  $0 \leq a \leq x$, $0 \leq b \leq y$, $a+b \leq 2m$ and 
  $2a+b \geq \max\{2x+y-2m+1,2m+1\}$.  By the special case with $\ell =4m$ and $\tau=0$ of Lemma \ref{main-lemma},  $F(x,y) < \frac{1}{2}\binom{4m}{2m}$.  
   
This completes the proof of Lemma \ref{half-bad}.
\end{proof}

If $G$ consists of two even cycles intersecting at a single vertex $v$, then $G-v$ is consists of two odd paths $P_1$ and $P_2$. It follows from Lemma \ref{half-bad} that there is a $2m$-subset $S$ of $L(v)$ such that $S$ is not bad with respect to both $(L,P_1)$ and $(L,P_2)$. Thus we can colour $v$ by $S$ and extend it to an $(L, 2m)$-colouring of $G$.

Assume $G$ consists of two even cycles $E$ and $F$  joined by a path $Q$, and let $u,v$ be the end vertices of $Q$, as shown in
Figure \ref{decomposing}.
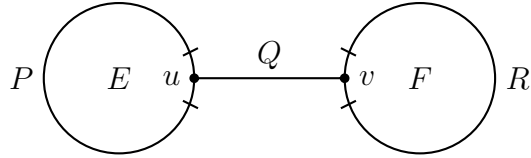
\begin{figure}[H]{}
\centering
	\begin{tikzpicture}	
	\draw [line width =0.8pt](1,1)circle (1);
	\filldraw [line width =0.8pt](2,1)circle (0.05)--(4,1)circle (0.05);
		\filldraw [line width =0.8pt](1.85,1.3)--(2.05,1.4);
		\filldraw [line width =0.8pt](1.85,0.7)--(2.05,0.6);
		\filldraw [line width =0.8pt](4.15,0.7)--(3.95,0.6);
		\filldraw [line width =0.8pt](4.15,1.3)--(3.95,1.4);
	\draw [line width =0.8pt](5,1)circle (1);
	\node at (3,1.3){$Q$};
    \node at (1,1){$E$};
    \node at (5,1){$F$};
    \node at (1.7,1){$u$};
    \node at (4.3,1){$v$};
	\node at (-0.3,1){$P$};
	\node at (6.3,1){$R$};
\end{tikzpicture}
\caption{Decomposing $G$ into $P,Q,R$.}
\label{decomposing}
\end{figure}

Observe that there is an injective function $h:\binom{L(u)}{2m} \rightarrow \binom{L(v)}{2m}$ such that for all $S \in \binom{L(u)}{2m}$, the precolouring $\phi(u)=S$, $\phi(v)=h(S)$ extends to all of $Q$.   Indeed, if $Q$ consists of a single vertex $v$, then $u=v$ and $h(S)=S$.
Otherwise,
for each $S \in \binom{L(u)}{2m}$, let $\phi(u) = S$, extend $\phi$ to a $2m$-fold $L$-colouring $\phi$ of $Q$. We simply let $h(S) = \phi(v)$.

By Lemma \ref{half-bad},  $L(u)$ has less than $\frac{1}{2}\binom{4m}{2m}$ bad $2m$-subsets  with respect to $(L,P)$, and $L(v)$ has less than $\frac{1}{2}\binom{4m}{2m}$ bad subsets of size $2m$ with respect to $(L,R)$. So there exists some $S$ such that   $S$ is not bad with respect to $(L,P)$ and $h(S)$ is not bad with respect to $(L,R)$. Therefore the pre-colouring $\phi$ of $u,v$ defined as $\phi(u)=S$ and $\phi(v)=h(S)$ extends to an $(L,2m)$-colouring of $G$.   This completes the first half of Theorem \ref{thm-main}.
\qed-

\subsection{Proof of the second part of Theorem \ref{thm-main}}
\label{sec-first-second}

The following lemma was proved in \cite{4choose2}.

\begin{lemma}[\cite{4choose2}]
	Let $G$ be a graph, let $v \in V(G)$, and let $G'$ be obtained from G by deleting $v$ and merging its neighbours. If $G$ is $(4m,2m)$-choosable, then $G'$ is $(4m,2m)$-choosable.  
\end{lemma}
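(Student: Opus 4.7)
I will show how to transform an arbitrary $4m$-list assignment $L'$ on $G'$ into a $4m$-list assignment $L$ on $G$ such that any $(L,2m)$-colouring of $G$ descends to an $(L',2m)$-colouring of $G'$. The construction will force the neighbours of $v$ in $G$ to receive identical colour sets, so that contracting them into a single vertex $w$ is consistent with the colouring.

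Write $N(v) = \{u_1,\ldots,u_k\}$ and let $w$ denote the vertex of $G'$ obtained by merging the $u_i$. Given $L'$, define $L$ on $G$ by $L(x) = L'(x)$ whenever $x \notin \{v,u_1,\ldots,u_k\}$ and $L(v) = L(u_1) = \cdots = L(u_k) = L'(w)$. Every list still has size $4m$, so by the hypothesis that $G$ is $(4m,2m)$-choosable there exists an $(L,2m)$-colouring $\phi$ of $G$. The key observation is that $v$ is adjacent to each $u_i$ in $G$, so $\phi(u_i) \subseteq L'(w) \setminus \phi(v)$; since $|L'(w) \setminus \phi(v)| = 2m = |\phi(u_i)|$, equality must hold, and in particular $\phi(u_1) = \cdots = \phi(u_k)$. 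I then define $\phi'$ on $G'$ by $\phi'(w) = L'(w) \setminus \phi(v)$ and $\phi'(x) = \phi(x)$ for every other vertex. Edges of $G'$ not touching $w$ inherit properness from $\phi$; for an edge $wx$ of $G'$ there is some $u_i$ with $u_i x \in E(G)$, so $\phi'(w) \cap \phi'(x) = \phi(u_i) \cap \phi(x) = \emptyset$, as required.

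There is no real obstacle here: the only conceptual point is that by assigning $v$ and all of its neighbours the very same $4m$-list $L'(w)$, the $(L,2m)$-colouring of $G$ is rigidly forced to take $\phi(u_i) = L'(w) \setminus \phi(v)$ at every $u_i$, which is precisely the uniformity one needs to contract the $u_i$ into a single vertex $w$ of $G'$.
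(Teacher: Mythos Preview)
Your argument is correct: giving $v$ and all its neighbours the common list $L'(w)$ forces every $u_i$ to receive the complement $L'(w)\setminus\phi(v)$, so the $u_i$ can be collapsed to $w$ and the resulting map is an $(L',2m)$-colouring of $G'$. The paper does not supply its own proof of this lemma; it simply quotes it from \cite{4choose2}, and what you wrote is exactly the standard argument used there.
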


\begin{corollary}
	If $\Theta_{2,2r,2s}$ is $(4m,2m)$-choosable, then $\Theta_{1,2r-1,2s-1}$ is $(4m,2m)$-choosable. 
\end{corollary}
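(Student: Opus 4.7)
The plan is to obtain the corollary by a direct application of the preceding lemma, once a suitable vertex to delete in $\Theta_{2,2r,2s}$ is identified. Set $G=\Theta_{2,2r,2s}$ and let $u_1,u_2$ be the two branch (degree-$3$) vertices of $G$. Denote by $w$ the unique internal vertex of the length-$2$ path, by $x_1,x_2,\ldots,x_{2r-1}$ the internal vertices of the length-$2r$ path listed in order from $u_1$ to $u_2$, and by $y_1,y_2,\ldots,y_{2s-1}$ the internal vertices of the length-$2s$ path listed in order from $u_1$ to $u_2$. Thus the three neighbours of $u_1$ in $G$ are precisely $w$, $x_1$ and $y_1$.

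I would apply the lemma with the vertex being deleted chosen to be $u_1$. Let $G'$ be the graph obtained from $G$ by deleting $u_1$ and merging its three neighbours $w,x_1,y_1$ into a single new vertex $z$. The edges of $G'$ are inherited from the edges of $G-u_1$ after this identification: the edge $wu_2$ becomes $zu_2$, the edge $x_1x_2$ becomes $zx_2$, the edge $y_1y_2$ becomes $zy_2$, and all edges not incident to $\{w,x_1,y_1\}$ are unchanged.

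Next I would verify that $G'\cong \Theta_{1,2r-1,2s-1}$. In $G'$ the only vertices of degree $\geq 3$ are $z$ (with neighbours $u_2,x_2,y_2$) and $u_2$ (with neighbours $z,x_{2r-1},y_{2s-1}$), and there are exactly three internally disjoint paths between them: the edge $zu_2$ of length $1$ (coming from the old path $u_1wu_2$), the path $z,x_2,x_3,\ldots,x_{2r-1},u_2$ of length $2r-1$, and the path $z,y_2,y_3,\ldots,y_{2s-1},u_2$ of length $2s-1$. Hence $G'=\Theta_{1,2r-1,2s-1}$. Since $G=\Theta_{2,2r,2s}$ is assumed $(4m,2m)$-choosable, the lemma immediately yields that $\Theta_{1,2r-1,2s-1}$ is $(4m,2m)$-choosable.

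There is essentially no obstacle in this argument; the only point requiring any care is the bookkeeping that checks the three paths between the two branch vertices of $G'$ have the claimed lengths $1$, $2r-1$, $2s-1$, which follows directly from the fact that contracting the two edges at a degree-$2$ neighbour of $u_1$ (here played effectively by the role of $w$) shortens the length-$2$ path to an edge while the other two paths each lose exactly one interior vertex (namely $x_1$ and $y_1$).
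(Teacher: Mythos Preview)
Your proof is correct and is exactly the intended argument: the paper states this corollary immediately after the lemma without further proof, because applying the lemma to a branch vertex of $\Theta_{2,2r,2s}$ and checking that the result is $\Theta_{1,2r-1,2s-1}$ is precisely the (routine) verification you carried out.
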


So it suffices to show that $\Theta_{2,2r,2s}$ is $(4m,2m)$-choosable, where $r, s \ge 2$. Instead of proving it directly, we prove the following stronger result.

\begin{theorem}
	\label{main-second-part-stronger}
Assume $G=\Theta_{2,2r,2s}$, where $r, s \geq 2$. Let $u,v$ be the two vertices of degree $3$. Let $P^0,P^1,P^2$ be the three paths of $G-\{u,v\}$.
Assume   $P^i=(v^i_1,v^i_2,\ldots,v^i_{n_i} )$, $|V(P^0)|=1$,   $v^i_1$ is adjacent to $u$ and $v^i_{n_i}$ is adjacent to $v$. 
	 Assume $\ell, \tau$ are non-negative even integers and  $L$ is a list assignment for $G$ satisfying the following:
	 \begin{enumerate}[label= {(C\arabic*)}]
	 	\item \label{C1}   $\tau \leq 2m$ and $\ell +\tau \geq 2m$.
	 	\item \label{C2} $|L(u)|=|L(v)| = \ell \ge 0$.
	 	\item \label{C3} For each $i \in \{0,1,2\}$, $|L(v^i_1)|,|L(v^i_{n_i})|\geq 4m-\tau$.
	 	\item \label{C4}  $|L(w)|= 4m$ for $w \ne u,v, v^i_1, v^i_{n_i}$.
	 	\item \label{C5}  For $i=0,1,2$, $S_L(P^i) \geq 2m|V(P^i)|+2m-\tau$, and   $$dam_{L,P^i}(L(u),L(v)) \leq S_L(P^i)-2m|V(P^i)|+\ell-2m+\tau.$$
	 \end{enumerate}	
	 	 Then there exists a   set $S \subset L(u)$ and a   set $T \subset L(v)$ satisfying $|S|=|T|=2m-\tau$ such that for each $i$, $$dam_{L,P^i}(S,T) \leq S_L(P^i)-2m|V(P^i)|.$$
\end{theorem}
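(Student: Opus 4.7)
The plan is to mimic the counting argument of Lemma~\ref{half-bad}, but now operating on pairs $(S,T) \in \binom{L(u)}{2m-\tau} \times \binom{L(v)}{2m-\tau}$ instead of single subsets of a $4m$-set. Calling a pair $(S,T)$ \emph{bad for $P^i$} when $dam_{L,P^i}(S,T) > S_L(P^i) - 2m|V(P^i)|$, I would show that the union over $i \in \{0,1,2\}$ of the bad pairs has strictly fewer than $\binom{\ell}{2m-\tau}^2$ elements, so that a simultaneously good pair exists and produces the desired $(S,T)$.

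First I would dispose of $P^0$, the length-$2$ path with single internal vertex $w$. Since $n_0 = 1$, Definition~\ref{def-slpAX1Xn} collapses to $\Lambda^0 = L(w)$ and $\hat X_1^0 = \hat X_n^0 = \emptyset$, so Lemma~\ref{slp-dam} reduces the damage to $|L(w) \cap (S \cup T)|$, and badness for $P^0$ becomes $|L(w) \cap (S \cup T)| > |L(w)| - 2m$. I would count these pairs by inclusion--exclusion on $|L(w) \cap S|$, $|L(w) \cap T|$, and $|L(w) \cap S \cap T|$, using Condition~\ref{C5} applied to $P^0$ to control $|L(w) \cap (L(u) \cup L(v))|$.

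For each of $P^1, P^2$ (odd paths with $n_i \ge 3$), I would mirror the partition used in the proof of Lemma~\ref{half-bad}, splitting the colors of $L(u) \cup L(v)$ into the classes $\hat X_1^i \cap \hat X_n^i$, $\hat X_1^i \Delta \hat X_n^i$, and $\Lambda^i$; observe that $\hat X_1^i$ and $\hat X_n^i$ are disjoint from $\Lambda^i$ by the clause ``$c \in L(v_1)-\Lambda$'' in Definition~\ref{def-slpAX1Xn}. By Lemma~\ref{slp-dam} the damage $dam_{L,P^i}(S,T)$ becomes a weighted sum with coefficients $1$ and $2$ of the sizes of the intersections of $S$ and $T$ with these classes. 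The bound in Condition~\ref{C5} on $dam_{L,P^i}(L(u),L(v))$ is engineered to translate into the constraint $2x+y \le \ell + 2m - \tau - 1$ appearing in the summation index of Lemma~\ref{main-lemma}. A direct application of Lemma~\ref{main-lemma} with the current $\ell$ and $\tau$ should then bound the number of bad choices at each end by $\frac{1}{2}\binom{\ell}{2m-\tau} - 1$, just as in Lemma~\ref{half-bad}.

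The hard part will be combining the three per-path estimates into a single count strictly less than $\binom{\ell}{2m-\tau}^2$. The term $|\Lambda^i \cap S \cap T|$ in the damage formula couples $S$ and $T$, so bad pairs do not factor as products of bad $S$'s and bad $T$'s. I plan to handle this by upper-bounding the damage by dropping that nonnegative subtracted term, which decouples the counting and lets Lemma~\ref{main-lemma} act independently on the $u$-side and the $v$-side (multiplying the bad count on one side by the trivial count $\binom{\ell}{2m-\tau}$ on the other); summing the resulting bounds with the $P^0$ estimate, and using the parity of $\ell$ and $\tau$ together with $\ell + \tau \ge 2m$, should close the inequality. The boundary cases where Lemma~\ref{main-lemma} does not directly apply, namely $\tau = 2m$ or $\ell + \tau \le 2m+1$, will need a short separate verification, most likely by a direct extension of the list assignment using Lemma~\ref{42lemma}.
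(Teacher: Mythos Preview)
Your counting strategy has a structural gap that cannot be closed by tightening the per-path estimates. Lemma~\ref{main-lemma} only guarantees, for each odd path $P^i$, that at most roughly $\tfrac12\binom{\ell}{2m-\tau}$ of the relevant choices are bad; with \emph{three} internal paths the union bound yields about $\tfrac32$ of the total, not less than $1$. Your decoupling trick (dropping the subtracted $|\Lambda^i\cap S\cap T|$) makes the damage larger, hence only increases the bad set, and after decoupling you still multiply the one-sided bad count by the trivial $\binom{\ell}{2m-\tau}$ on the other side; doing this for $P^1$ and $P^2$ already consumes essentially all of $\binom{\ell}{2m-\tau}^2$, leaving no room for $P^0$. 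Nothing in conditions \ref{C1}--\ref{C5} forces the three bad sets to be nearly disjoint, so a pure counting/union argument over $\binom{L(u)}{2m-\tau}\times\binom{L(v)}{2m-\tau}$ cannot succeed.

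The paper's proof is not a counting argument but an induction on $2\ell+\tau$. It fixes a common indexing of $L(u)$ and $L(v)$, works only with \emph{simple pairs} (those sharing the same index set), and classifies each couple $c_jc_j'$ as heavy/light/safe for each $P^i$ according to whether $dam_{L,P^i}(c_j,c_j')=2,1,0$. The induction step uses two reduction claims: if some even-size simple pair has damage $\ge d$ on all three paths one may delete it and shrink $\ell$; if some even-size simple pair has damage $\le d$ on all three paths one may delete it, shrink $\ell$, and raise $\tau$. Assuming no good $(S,T)$ exists, Lemma~\ref{main-lemma} is invoked only to show each $\beta(P^i)\ge 2$, which then forces $x^{(i)},z^{(i)}\ge 1$. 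A short combinatorial analysis (each couple is heavy for exactly one path, light for another, safe for the third, in a cyclic pattern) then produces explicit small simple pairs with damage $\le d$ on all paths, contradicting the second reduction claim. The counting lemma is thus a small ingredient inside an inductive structural argument, not the engine of the proof.
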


 Theorem \ref{thm-main} follows from Theorem \ref{main-second-part-stronger} by setting $\ell =4m$ and $\tau=0$. Indeed, by Lemma \ref{first lower bound for SLP}, $S_{L}(P^i) \geq 2m|V(P^i)|+2m$, and $S_{L}(P) -2m|V(P^i)|+2m \geq |\hat{X}^i_1|+|\hat{X}^i_n|+|\Lambda^i| \geq dam_{L,P^i}(L(u),L(v))$ (The last inequality holds by Lemma \ref{slp-dam}). So there exist two   sets   $S \subset L(u)$, $T \subset L(v)$ such that $|S|=|T|=2m$ and $dam_{L,P^i}(S,T) \leq S_L(P^i) -2m|V(P^i)|$,  which implies that $G$ is $(4m,2m)$-choosable.

 Let $L(u)=\{c_1, c_2, \ldots, c_{\ell} \}$ and $L(v)=\{c'_1,c'_2, \ldots, c'_{\ell}\}$ be indexed in such a way   that $c_j=c'_j$ whenever $c_j \in L(u)\cap L(v)$. In other words, $\{c_i,c'_i\}\cap \{c_j,c'_j\}=\emptyset$ whenever $i\neq j$.  

\begin{definition}
	\label{bad simple pair}
	For a fixed indexing of $L(u)$ and $L(v)$, a {\rm couple} is a tuple of the form $(c_j,c'_j)$ for $j\in\{1,2,\ldots,\ell \}$. When we write a couple, we suppress the parentheses and simply write $c_jc'_j$. A {\rm pair} is a tuple $(S,T)$ with $S\subset L(u)$, $T\subset L(v)$, and  $|S|=|T|$, and we define the \emph{size} of a pair as $|S|$. 
	A  pair $(S,T)$ is {\rm bad with respect to $(L,P)$} if $dam_{L,P}(S,T) > S_{L}(P) - 2m|V(P)|$. 
 {A {\rm simple pair} is a pair $(S,T)$ such that $S,T$ have the same index set.}
\end{definition}
 
By Lemma \ref{slp-dam}, we know that if $(S,T)$ is a simple pair, then 
\begin{equation}
	\label{comput for dam(S,T)}
	dam_{L,P}(S,T)=\sum_{c_j\in S}dam_{L,P}(\{c_j\},\{c'_j\}).
\end{equation}

In the following, we may write $dam_{L,P}(c,c')$ for $dam_{L,P}(\{c\},\{c'\})$.
The following observation follows from Lemma \ref{slp-dam}.

\begin{observation}
	\label{obs-couple}
	 For any couple $cc'$, the following hold:
	  \begin{enumerate}
	  	\item $dam_{L,P}(c,c') = 2$ if   $c \in \hat{X}_1 \cup \Lambda$ and $c' \in \hat{X}_n \cup \Lambda$, and moreover if $c = c'$, then $c \notin \Lambda$; 
	  	\item $dam_{L,P}(c,c') = 1$ if $c \in \hat{X}_1 \cup \Lambda$ or  $c' \in \hat{X}_n \cup \Lambda$ but not both  unless $c=c' \in \Lambda$;  
	  	\item $dam_{L,P}(c,c') = 0$ if $c \notin \hat{X}_1 \cup \Lambda$ and $c' \notin \hat{X}_n \cup \Lambda$.
	  \end{enumerate}
\end{observation}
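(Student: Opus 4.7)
The plan is to apply Lemma \ref{slp-dam} directly with $S=\{c\}$ and $T=\{c'\}$, which yields
$$dam_{L,P}(c,c') = |\hat{X}_1 \cap \{c\}| + |\hat{X}_n \cap \{c'\}| + |\Lambda \cap (\{c\}\cup\{c'\})|,$$
and then verify the three cases by a short case analysis. A preliminary point I would highlight at the start is that by Definition \ref{def-slpAX1Xn} both $\hat{X}_1$ and $\hat{X}_n$ are drawn from $L(v_1)-\Lambda$ and $L(v_n)-\Lambda$ respectively, so $\hat{X}_1 \cap \Lambda = \hat{X}_n \cap \Lambda = \emptyset$. This disjointness is what rules out unwanted overlap contributions in the formula above.

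For Case (1), assume $c \in \hat{X}_1 \cup \Lambda$ and $c' \in \hat{X}_n \cup \Lambda$, with the extra stipulation that $c \notin \Lambda$ whenever $c = c'$. If $c = c'$ (and hence $c \notin \Lambda$), disjointness forces $c \in \hat{X}_1$ and $c'=c \in \hat{X}_n$, giving $1+1+0=2$. If $c \ne c'$, I would check the four sub-scenarios according to which of $\hat{X}_1,\Lambda$ contains $c$ and which of $\hat{X}_n,\Lambda$ contains $c'$: in each case, using $|\Lambda\cap\{c,c'\}| = |\Lambda\cap\{c\}| + |\Lambda\cap\{c'\}|$ (valid because $c\ne c'$), the three terms always sum to $2$.

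For Case (2), the hypothesis splits into two scenarios: either exactly one of the two memberships $c\in\hat{X}_1\cup\Lambda$, $c'\in\hat{X}_n\cup\Lambda$ holds, or $c=c'\in\Lambda$. In the first scenario, say $c\in\hat{X}_1\cup\Lambda$ but $c'\notin\hat{X}_n\cup\Lambda$; then $c'\notin\Lambda$ in particular, so the sum reduces either to $1+0+0$ (if $c\in\hat{X}_1$) or to $0+0+1$ (if $c\in\Lambda$), both equal to $1$; the symmetric argument handles the case where only $c'$ has the membership. In the second scenario $c=c'\in\Lambda$, the two indicator terms vanish by disjointness, while $|\Lambda\cap(\{c\}\cup\{c'\})| = |\Lambda\cap\{c\}| = 1$. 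Finally, Case (3) is immediate: $c\notin\hat{X}_1\cup\Lambda$ and $c'\notin\hat{X}_n\cup\Lambda$ force $c,c'\notin\Lambda$, so all three terms vanish.

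The only subtlety — and the ``obstacle'' in any — is bookkeeping the fact that $|\{c\}\cup\{c'\}|=1$ when $c=c'$, so that $|\Lambda\cap(\{c\}\cup\{c'\})|\le 1$ rather than $2$ in that situation. This is precisely the reason Case (1) excludes the configuration $c=c'\in\Lambda$ (where the total drops from $2$ to $1$) and Case (2) explicitly restores it via the ``unless'' clause; once this is made explicit, the rest is a routine enumeration.
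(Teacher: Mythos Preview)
Your proposal is correct and follows exactly the approach the paper indicates: the paper simply states that the observation follows from Lemma~\ref{slp-dam}, and your argument spells out precisely that derivation by plugging in $S=\{c\}$, $T=\{c'\}$ and using the disjointness of $\hat{X}_1,\hat{X}_n$ from $\Lambda$. The case analysis you give is accurate, including the handling of the $c=c'$ subtlety.
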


\begin{definition}
	\label{safe-light-heavy}
	Assume $c_jc'_j$ is a couple. 
	\begin{itemize}
		\item   $c_jc'_j$ is {\rm heavy} for the internal path $P$ if $dam_{L,P}(c_j, c'_j)=2$;
		\item   $c_jc'_j$ is {\rm light} for the internal path $P$ if $dam_{L,P}(c_j, c'_j)=1$;
		\item   $c_jc'_j$ is {\rm safe} for the internal path $P$ if $dam_{L,P}(c_j, c'_j)=0$.
	\end{itemize}
\end{definition}

For each $i\in \{0,1,2\}$, let $x^{(i)},y^{(i)},z^{(i)}$ denote the number of heavy, light couples and safe couples for $P^{i}$, respectively. For a simple pair $(S,T)$ which is of size $2m-\tau$,  let $a^{(i)}(S,T),b^{(i)}(S,T),c^{(i)}(S,T)$ denote the number of heavy, light and safe couples for $P^{i}$ in  $ (S,T)$, respectively. 

It follows from the definition that $x^{(i)}+y^{(i)}+z^{(i)}=\ell$, and  $a^{(i)}(S,T)+b^{(i)}(S,T)+c^{(i)}(S,T)=2m-\tau$. Thus by Equality (\ref{comput for dam(S,T)}), $dam_{L,P^i}(S,T) = 2a^{(i)}(S,T)+b^{(i)}(S,T)$. Let $\beta(P^{i})$ denote the   number of  bad simple pairs of size $2m-\tau$ with respect to  $(L, P^i)$. We write $\hat{X}^i_1, \hat{X}^i_n$ and $\Lambda^i$ for the sets $\hat{X}_1, \hat{X}_n, \Lambda$   calculated for $P^i$.

\medskip
\noindent 
{\bfseries Proof of Theorem \ref{main-second-part-stronger}:} 
First we observe that the conclusion of 
Theorem \ref{main-second-part-stronger} is equivalent to the statement that  there exists a pair $(S,T)$ which  is not a bad pair for any of the paths $P^0,P^1,P^2$.

The proof is by induction on $2\ell+\tau$.  First assume that $2\ell + \tau =2m$. Since both $\ell$ and $\tau$ are non-negative, and $\ell +\tau \geq 2m$, we have that $\ell = 0$ and $\tau= 2m$. By assumption, for each $i \in \{0,1,2\}$, 
$$S_L(P^i)-2m|V(P^i)| \geq dam_{L,P^i}(L(u),L(v))=0,$$ so $S_L(P^i) \geq 2m|V(P^i)|$. Then let $S=L(u)= \emptyset$, $T=L(v) =\emptyset$, and we are done. This finishes the basic step of the induction.

Thus in the sequel, we assume that  $2\ell+\tau \geq 2m+2$. If $\ell + \tau =2m$, then we let $S=L(u)$, $T=L(v)$, and we are done. Hence we assume that $\ell +\tau \geq 2m+2$. If $\tau =2m$, then the statement holds obviously, since we can just take $S=T=\emptyset$ and we are done. So we assume that $\tau \leq 2m-2$. Assume   that Theorem \ref{main-second-part-stronger} is not true for $L$.

\begin{claim}
	\label{all at least light}
	 There does not exist a simple pair $(D_u,D_v)$ such that $|D_u|=|D_v|=d \le \ell -2m+\tau$ is even,  and $dam_{L,P^i}(D_u,D_v) \geq d$ for each  $i \in \{0,1,2\}$. 
\end{claim}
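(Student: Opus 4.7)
The plan is to derive a contradiction using the induction on $2\ell+\tau$ that underlies the proof of Theorem \ref{main-second-part-stronger}. Suppose for contradiction that such a simple pair $(D_u,D_v)$ of even size $d\ge 2$ exists. I would form the reduced instance by setting $L'(u)=L(u)\setminus D_u$, $L'(v)=L(v)\setminus D_v$, and $L'(w)=L(w)$ for every other vertex $w$, together with the new parameters $\ell'=\ell-d$ and $\tau'=\tau$. Since $d\ge 2$, we have $2\ell'+\tau'<2\ell+\tau$, so the inductive hypothesis will be available once the hypotheses of the theorem are verified for $(L',\ell',\tau')$.

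Verification of conditions (\ref{C1})--(\ref{C4}) is routine. The bound $d\le\ell-2m+\tau$ gives $\ell'\ge 2m-\tau\ge 0$ and $\ell'+\tau'\ge 2m$; the lists on the interior paths are unchanged, and $|L'(u)|=|L'(v)|=\ell'$ by construction. The substantive check is (\ref{C5}). Under the fixed indexing $c_j\leftrightarrow c'_j$, both $(L(u),L(v))$ and $(D_u,D_v)$ are simple pairs, hence so is their difference $(L'(u),L'(v))$, and the additivity identity (\ref{comput for dam(S,T)}) yields
$$dam_{L,P^i}(L'(u),L'(v))=dam_{L,P^i}(L(u),L(v))-dam_{L,P^i}(D_u,D_v).$$
Combining the assumed bound $dam_{L,P^i}(D_u,D_v)\ge d$ with (\ref{C5}) applied to $L$ gives
$$dam_{L,P^i}(L'(u),L'(v))\le S_L(P^i)-2m|V(P^i)|+\ell-2m+\tau-d=S_{L'}(P^i)-2m|V(P^i)|+\ell'-2m+\tau',$$
which is exactly (\ref{C5}) for $(L',\ell',\tau')$. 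The inequality $S_{L'}(P^i)\ge 2m|V(P^i)|+2m-\tau'$ is inherited unchanged because the lists on each $P^i$ were not modified.

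The induction hypothesis now supplies sets $S\subset L'(u)$ and $T\subset L'(v)$ with $|S|=|T|=2m-\tau$ and $dam_{L,P^i}(S,T)\le S_L(P^i)-2m|V(P^i)|$ for every $i\in\{0,1,2\}$. Because $L'(u)\subset L(u)$ and $L'(v)\subset L(v)$, the same pair $(S,T)$ also resolves the original instance $L$, contradicting the standing assumption that Theorem \ref{main-second-part-stronger} fails for $L$. The only delicate point is the simple-pair additivity used to subtract $dam_{L,P^i}(D_u,D_v)$ from $dam_{L,P^i}(L(u),L(v))$; once that identity is in hand, everything else is bookkeeping with the reduced parameters $\ell'$ and $\tau'$.
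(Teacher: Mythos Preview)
Your proposal is correct and follows essentially the same argument as the paper: reduce to the instance $(L',\ell',\tau')$ with $\ell'=\ell-d$ and $\tau'=\tau$, verify (C1)--(C5) using the additivity of the damage function on simple pairs together with the hypothesis $dam_{L,P^i}(D_u,D_v)\ge d$, and invoke the induction on $2\ell+\tau$ to obtain a good pair $(S,T)$, contradicting the standing assumption. You are slightly more explicit than the paper in noting $d\ge 2$ (needed so that $2\ell'+\tau'<2\ell+\tau$) and in observing that $S_{L'}(P^i)=S_L(P^i)$ and $dam_{L',P^i}=dam_{L,P^i}$ because the path lists are untouched, but these are exactly the points implicit in the paper's shorter write-up.
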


\begin{proof}
	Assume $(D_u,D_v)$ is such a simple pair. Let $L'$ be a new list assignment for $G$ with $L'(u)= L(u)-D_u$, $L'(v)=L(v)-D_v$, $L'(w)=L(w)$ for $w \in V(G) \setminus \{u,v\}$. 
	
	\ref{C1}-\ref{C4} of Theorem \ref{main-second-part-stronger} are easily seen to be satisfied by $L'$, with $ \ell'= \ell - d$ and $\tau'=\tau$.   As
	\begin{equation*}
	\begin{array}{ll}
	dam_{L',P^i}(L'(u),L'(v)) & = dam_{L,P^i}(L(u),L(v))-dam_{L,P^i}(D_u,D_v) \\
	& \leq  S_L(P^i)-2m|V(P^i)|+\ell-2m+\tau-d \\
	& = S_{L'}(P^i)-2m|V(P^i)| + \ell'+ \tau'-2m,
	\end{array}
	\end{equation*}
	\ref{C5} is also satisfied by $L'$.
	By induction,   there exists a pair $(S,T)$, where $|S|=|T|=2m-\tau$ such that for each $i \in \{0,1,2\}$, $$dam_{L,P^i}(S,T) \leq S_L(P^i)-2m|V(P^i)|.$$  This completes the proof of this  claim.
\end{proof}

\begin{claim}
	\label{all at most light}
	There does not exist a simple pair $(D_u,D_v)$ such that $|D_u|=|D_v|=d \le 2m-\tau$ is even,  and $dam_{L,P^i}(D_u,D_v) \leq d$ for each  $i \in \{0,1,2\}$. 
\end{claim}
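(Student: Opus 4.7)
The plan is to argue by contradiction, mirroring the structure of Claim \ref{all at least light}. Suppose a simple pair $(D_u, D_v)$ of even size $d$ with $2 \le d \le 2m-\tau$ and $dam_{L,P^i}(D_u, D_v) \le d$ for each $i$ exists. I will construct a pair $(S, T)$ with $|S| = |T| = 2m-\tau$ and $dam_{L,P^i}(S, T) \le S_L(P^i) - 2m|V(P^i)|$ for each $i$, contradicting the standing assumption that Theorem \ref{main-second-part-stronger} fails for $L$.

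First, I would dispense with the boundary case $d = 2m-\tau$ by simply setting $(S, T) = (D_u, D_v)$: the first half of \ref{C5} immediately gives $dam_{L,P^i}(S, T) \le d = 2m-\tau \le S_L(P^i) - 2m|V(P^i)|$, as required. For the main range $2 \le d < 2m-\tau$, the plan is to use $(D_u, D_v)$ as the safe core of $(S, T)$ and find the remaining $2m-\tau-d$ colors by induction. Define $L'$ from $L$ by $L'(u) = L(u) \setminus D_u$, $L'(v) = L(v) \setminus D_v$, leaving other lists unchanged; set $\ell' = \ell - d$ and $\tau' = \tau + d$. Then $\ell' + \tau' = \ell + \tau$, the induction parameter $2\ell' + \tau' = 2\ell + \tau - d$ strictly decreases, and conditions \ref{C1}--\ref{C4} are immediate. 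For \ref{C5}, the identity $dam_{L', P^i}(L'(u), L'(v)) = dam_{L,P^i}(L(u), L(v)) - dam_{L,P^i}(D_u, D_v)$ (a consequence of Lemma \ref{slp-dam} together with the disjointness of the color-indices of $(D_u, D_v)$ and of $(L(u) \setminus D_u, L(v) \setminus D_v)$), combined with the original \ref{C5}, yields what is needed. The induction hypothesis then produces $(S', T')$ of size $2m-\tau-d$ with $dam_{L,P^i}(S', T') \le S_L(P^i) - 2m|V(P^i)|$. Forming $(S, T) = (D_u \cup S', D_v \cup T')$, which has size $2m-\tau$, Lemma \ref{slp-dam} yields the additive decomposition $dam_{L,P^i}(S, T) = dam_{L,P^i}(D_u, D_v) + dam_{L,P^i}(S', T')$.

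The hard part will be closing a possible gap of up to $d$: the naive sum of the two bounds is $dam_{L,P^i}(S, T) \le d + S_L(P^i) - 2m|V(P^i)|$, which exceeds the target by as much as $d$. To absorb this slack, my plan is to refine the reduction by additionally trimming the end-vertex lists $L(v_1^i)$ and $L(v_{n_i}^i)$ before invoking the induction: for each $P^i$, remove a carefully chosen set of $dam_{L,P^i}(D_u, D_v)$ colors contributing to $\hat X_1^i \cup \hat X_n^i \cup \Lambda^i$ so that $S_{L'}(P^i)$ drops by exactly $dam_{L,P^i}(D_u, D_v)$. Because $\tau' = \tau + d$, condition \ref{C3} still permits end-vertex lists of size $|L(v_1^i)| - dam_{L,P^i}(D_u, D_v) \ge 4m - \tau - d = 4m - \tau'$, so the trimming is allowed. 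The resulting tightened bound on $(S', T')$ from the induction should then close the gap and yield a pair $(S, T)$ contradicting our standing assumption. Verifying that the trimmings can be chosen coherently at each $v_1^i, v_{n_i}^i$ and that all five conditions \ref{C1}--\ref{C5} remain valid in the modified $L'$ is the principal technical obstacle in realizing this plan.
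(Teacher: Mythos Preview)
Your plan is the paper's approach: reduce by trimming $L(u)$, $L(v)$, and the endpoint lists of each $P^i$, then apply induction with $\ell'=\ell-d$, $\tau'=\tau+d$. The piece you leave open---the ``carefully chosen'' trimming---has a canonical answer that removes the difficulty entirely: simply delete $D_u$ from $L(v_1^i)$ and $D_v$ from $L(v_{n_i}^i)$ for $i=1,2$, and delete $D_u\cup D_v$ from $L(v_1^0)$ (since $P^0$ has a single vertex). With this choice, $L'$ restricted to $P^i$ is precisely $L\ominus(D_u,D_v)$, so by the definition of damage $S_{L'}(P^i)=S_L(P^i)-dam_{L,P^i}(D_u,D_v)$. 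Moreover, for the inductive output $(S',T')$ with $S=D_u\cup S'$, $T=D_v\cup T'$ (disjoint unions), one has $L\ominus(S,T)=L'\ominus(S',T')$ on $P^i$, whence
\[
dam_{L,P^i}(S,T)=dam_{L,P^i}(D_u,D_v)+dam_{L',P^i}(S',T')\le dam_{L,P^i}(D_u,D_v)+S_{L'}(P^i)-2m|V(P^i)|=S_L(P^i)-2m|V(P^i)|,
\]
closing the gap exactly. Your worry about ``coherent'' choices disappears: the same sets $D_u,D_v$ are removed at every path, and since $|D_u|=|D_v|=d$, each endpoint list still has size at least $4m-\tau-d=4m-\tau'$, so \ref{C3} holds. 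You also do not need to separate the case $d=2m-\tau$; it falls out of the same induction with $\tau'=2m$ and $S'=T'=\emptyset$. In short, your proposal is correct and follows the paper's route; the only missing observation is that the natural trimming is $D_u$ and $D_v$ themselves, not some ad hoc set of size $dam_{L,P^i}(D_u,D_v)$.
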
 

\begin{proof}
	Assume   $(D_u,D_v)$ is such a simple pair. Let $L'$ be a new list assignment for $G$ with $L'(u)=L(u)-D_u$, $L'(v)=L(v)-D_v$, for $i=1,2$,   $L'(v^i_1)= L(v^i_1)-D_u$, $L'(v^i_{n_i})=L(v^i_{n_i})-D_v$,  $L'(v^i_j)=L(v^i_j)$ where $1 < j < n_i$, and $L'(v^0_1)=L(v^0_1)-D_u \cup D_v$. 
	
	Note that $dam_{L,P^0}(D_u,D_v) \leq d$ implies that $|L(v^0_1)-D_u \cup D_v| \ge |L(v^0_1)|-d$. So 
	\ref{C1}-\ref{C4} of Theorem \ref{main-second-part-stronger} are  satisfied by $L'$, with $ \ell'= \ell - d$ and $\tau'=\tau+d$.   As
	$S_{L}(P^i) = S_{L'}(P) + dam_{L,P^i}(D_u,D_v)$,  we have $S_L'(P^i) = S_{L}(P^i)- dam_{L,P^i}(D_u,D_v) \geq 2m|V(P^i)|+2m-\tau-d$, and also by the second part of \ref{C5}, 
	it follows that 
	\begin{equation*}
	\begin{array}{ll}
	dam_{L',P^i}(L'(u),L'(v)) & \le dam_{L,P^i}(L(u),L(v))-d \\
	& \leq  S_L(P^i)-2m|V(P^i)|+\ell-2m+\tau-d \\
	& =  S_{L'}(P^i)+d-2m|V(P^i)|+\ell-2m+\tau-d \\
	& =  S_{L'}(P^i)-2m|V(P^i)|+(\ell-d)+(\tau+d)-2m.
	\end{array}
	\end{equation*}	
	So	\ref{C5} is also satisfied by $L'$.
	By induction,   there exists a pair $(S',T')$, where $|S'|=|T'|=2m-\tau' = 2m-\tau-d$ such that for every $i$, $$dam_{L',P^i}(S',T') \leq S_{L'}(P^i)-2m|V(P^i)|.$$  Let $S=S' \cup D_u$ and  $T=T' \cup D_v$. We have $|S|=|T|=2m-\tau$ and 
	\begin{equation*}
		\begin{array}{ll}
			dam_{L,P^i}(S,T) & = dam_{L,P^i}(D_u,D_v)+dam_{L,P^i}(S',T') \\
							 & \leq dam_{L,P^i}(D_u,D_v)+  S_{L'}(P^i)-2m|V(P^i)|\\
							 & = S_{L}(P^i)-2m|V(P^i)|.
		\end{array}
	\end{equation*} 
	
	This completes the proof of Claim \ref{all at most light}.
\end{proof}

The following claim gives a necessary condition for a simple pair of size $2m-\tau$ being bad with respect to $(L,P^i)$.

\begin{claim}
	\label{claim-bad simple pair}
	If $(S,T)$ is a bad simple pair of size $2m-\tau$ with respect to $(L, P^i)$, then 
	$dam_{L,P^i}(S,T) = 2a^{(i)}(S,T)+b^{(i)}(S,T) \geq \max\{2x^{(i)}+y^{(i)}+2m+1-\ell -\tau, 2m+1-\tau\}$.
\end{claim}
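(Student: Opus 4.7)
My plan is to unfold the definition of ``bad'' and exploit the two inequalities packaged in condition \ref{C5}. By Definition \ref{safe-light-heavy} and equation (\ref{comput for dam(S,T)}), each heavy couple in the simple pair $(S,T)$ contributes $2$ to $dam_{L,P^i}(S,T)$, each light couple contributes $1$, and each safe couple contributes $0$; hence $dam_{L,P^i}(S,T) = 2a^{(i)}(S,T) + b^{(i)}(S,T)$. The assumption that $(S,T)$ is bad with respect to $(L,P^i)$ therefore reads
$$2a^{(i)}(S,T) + b^{(i)}(S,T) > S_L(P^i) - 2m|V(P^i)|,$$
and since the left-hand side is an integer this strengthens to $2a^{(i)} + b^{(i)} \geq S_L(P^i) - 2m|V(P^i)| + 1$. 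The task now reduces to producing two separate lower bounds for $S_L(P^i) - 2m|V(P^i)|$, each coming from one of the two inequalities packaged in \ref{C5}.

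The first bound is immediate: the first part of \ref{C5} gives $S_L(P^i) \geq 2m|V(P^i)| + 2m - \tau$, so $S_L(P^i) - 2m|V(P^i)| \geq 2m - \tau$ and therefore $2a^{(i)} + b^{(i)} \geq 2m + 1 - \tau$. For the second bound, I would apply the couple decomposition to the simple pair $(L(u), L(v))$ itself, which is simple by the indexing convention $c_j = c'_j \Leftrightarrow c_j \in L(u) \cap L(v)$. By (\ref{comput for dam(S,T)}), $dam_{L,P^i}(L(u),L(v)) = \sum_{j=1}^{\ell} dam_{L,P^i}(c_j,c'_j)$, and sorting the couples into heavy, light, and safe groups gives the key identity $dam_{L,P^i}(L(u),L(v)) = 2x^{(i)} + y^{(i)}$. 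Combining this with the second inequality in \ref{C5}, namely $dam_{L,P^i}(L(u),L(v)) \leq S_L(P^i) - 2m|V(P^i)| + \ell - 2m + \tau$, yields $S_L(P^i) - 2m|V(P^i)| \geq 2x^{(i)} + y^{(i)} - \ell + 2m - \tau$, whence $2a^{(i)} + b^{(i)} \geq 2x^{(i)} + y^{(i)} + 2m + 1 - \ell - \tau$.

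Taking the maximum of these two lower bounds gives exactly the inequality stated in the claim. I do not foresee any substantial obstacle: the entire argument is bookkeeping once the identity $dam_{L,P^i}(L(u),L(v)) = 2x^{(i)} + y^{(i)}$ is recognized. The only step worth double-checking is that equation (\ref{comput for dam(S,T)}) indeed applies to $(L(u),L(v))$ --- which it does, because that pair is simple with index set $\{1,2,\ldots,\ell\}$ --- so that the couple-wise damage sum partitions cleanly according to the heavy/light/safe classification.
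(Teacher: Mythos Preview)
Your proposal is correct and follows essentially the same route as the paper: both arguments compute $dam_{L,P^i}(L(u),L(v)) = 2x^{(i)}+y^{(i)}$ from the couple decomposition, feed the two inequalities of \ref{C5} into the badness inequality $dam_{L,P^i}(S,T) > S_L(P^i)-2m|V(P^i)|$, and pass to $\geq \ldots +1$ by integrality. One tiny remark: when you strengthen the strict inequality, note that it is the \emph{right-hand side} $S_L(P^i)-2m|V(P^i)|$ being an integer that matters (which it is, as a difference of cardinalities), not merely the left-hand side.
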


\begin{proof}
	By Equality (\ref{comput for dam(S,T)}), $dam_{L,P^i}(L(u),L(v))= 2x^{(i)}+y^{(i)}$. By \ref{C5},    $$S_L(P^i) \geq \max\{2m|V(P^i)|+2x^{(i)}+y^{(i)}+2m-\ell-\tau, 2m|V(P^i)|+ 2m-\tau\}.$$  If $(S,T)$ is a bad simple pair of size $2m-\tau$ with respect to $(L, P^i)$, then by Definition \ref{bad simple pair} and above inequality,
	\begin{equation*}
	\begin{array}{ll} 
	dam_{L,P^i}(S,T) & =  2a^{(i)}(S,T)+b^{(i)}(S,T) \\
	& \geq  S_L(P^i) -2m|V(P^i)| + 1  \\
	& \ge \max\{2x^{(i)}+y^{(i)} +2m+1-\ell-\tau, 2m+1-\tau\}.
	\end{array}
	\end{equation*} 
	
	Thus we proved this claim
\end{proof}

The following claim gives an upper bound and a lower bound of the number of bad simple pairs of size $2m-\tau$ with respect to $(L,P^i)$.

\begin{claim}
	\label{second case half lemma}
	For each $i \in \{0,1,2\}$, $2 \le  \beta(P^i)  \leq \frac{1}{2}\binom{\ell}{2m-\tau}-1$.  
\end{claim}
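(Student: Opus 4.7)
The plan is to prove the two bounds separately, with the upper bound reducing to an instance of Lemma \ref{main-lemma} and the lower bound following from a union bound that exploits the standing assumption (made at the start of the proof of Theorem \ref{main-second-part-stronger}) that Theorem \ref{main-second-part-stronger} fails for $L$.

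For the upper bound $\beta(P^i) \le \frac{1}{2}\binom{\ell}{2m-\tau} - 1$, I would fix $i \in \{0,1,2\}$ and set $x = x^{(i)}$, $y = y^{(i)}$, so that the number of safe couples is $z = \ell - x - y$. Any simple pair $(S,T)$ of size $2m - \tau$ is determined by its common index set, which splits into $a$ heavy indices (chosen from the $x$ heavy couples), $b$ light indices (from the $y$ light couples), and $c = 2m-\tau-a-b$ safe indices (from the $z$ safe couples); hence the number of simple pairs with prescribed $(a,b)$ is exactly $\binom{x}{a}\binom{y}{b}\binom{\ell-x-y}{2m-\tau-a-b}$. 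By Claim \ref{claim-bad simple pair}, a bad simple pair must additionally satisfy $2a+b \ge \max\{2x+y+2m+1-\ell-\tau,\, 2m+1-\tau\}$. Summing the binomial product over all such admissible $(a,b)$ gives precisely the expression $F(x,y)$ of Lemma \ref{main-lemma}, so after verifying the hypotheses of that lemma (in particular $\ell \le 4m$, which is an invariant of the induction in Theorem \ref{main-second-part-stronger}, together with $\tau \le 2m-2$, $\ell + \tau \ge 2m+2$, the evenness of $\ell, \tau$, and $x + y \le \ell$), Lemma \ref{main-lemma} yields $\beta(P^i) \le F(x^{(i)}, y^{(i)}) \le \frac{1}{2}\binom{\ell}{2m-\tau} - 1$.

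For the lower bound $\beta(P^i) \ge 2$, I would argue by contradiction. Suppose some $\beta(P^{i_0}) \le 1$. By the upper bound already established, each of the other two paths contributes at most $\frac{1}{2}\binom{\ell}{2m-\tau} - 1$ bad simple pairs, so the total number of simple pairs of size $2m-\tau$ that are bad for at least one $P^j$ is at most
\begin{equation*}
1 + 2\left(\tfrac{1}{2}\binom{\ell}{2m-\tau} - 1\right) \;=\; \binom{\ell}{2m-\tau} - 1.
\end{equation*}
Since there are $\binom{\ell}{2m-\tau}$ simple pairs of size $2m-\tau$ in total, at least one of them is good for every $P^i$, i.e. satisfies $dam_{L,P^i}(S,T) \le S_L(P^i) - 2m|V(P^i)|$ for all $i \in \{0,1,2\}$. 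This is exactly the conclusion of Theorem \ref{main-second-part-stronger} for $L$, contradicting the standing assumption that the theorem fails for $L$.

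The main obstacle I anticipate is the bookkeeping for the upper bound: checking that the range of $(a,b)$ dictated by ``bad simple pair'' via Claim \ref{claim-bad simple pair} coincides exactly with the summation range in Lemma \ref{main-lemma}, and that every hypothesis of Lemma \ref{main-lemma} is preserved through the inductive reductions performed in the proof of Theorem \ref{main-second-part-stronger}. Once the upper bound is correctly established, the lower bound is an immediate union-bound consequence of the negation of Theorem \ref{main-second-part-stronger}.
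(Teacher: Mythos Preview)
Your proposal is correct and follows essentially the same approach as the paper: the upper bound is obtained by counting bad simple pairs via the decomposition into heavy/light/safe couples, invoking Claim~\ref{claim-bad simple pair} to obtain the constraint $2a+b\ge\max\{2x+y+2m+1-\ell-\tau,\,2m+1-\tau\}$ and then applying Lemma~\ref{main-lemma}; the lower bound is the same union-bound contradiction with the standing assumption that Theorem~\ref{main-second-part-stronger} fails for $L$. Your explicit check of the hypotheses of Lemma~\ref{main-lemma} (in particular that $\ell\le 4m$ is an invariant of the induction) is a useful addition that the paper leaves implicit.
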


\begin{proof}
	If a simple pair $(S,T)$ of size $2m-\tau$ is bad with respect to $(L,P^i)$, then by Claim \ref{claim-bad simple pair}, $dam_{L,P^i}(S,T)  \geq \max\{2x^{(i)}+y^{(i)} +2m+1-\ell-\tau, 2m+1-\tau\}$. Note that $a^{(i)}(S,T) + b^{(i)}(S,T) +c^{(i)}(S,T) =2m-\tau$, it follows from   Lemma \ref{main-lemma} that $\beta(P^i) \leq  \frac{1}{2}\binom{\ell}{2m-\tau}-1$. 
	
	If $\beta(P^i) \le 1$ for some $i$,   then $\beta(P^0)+\beta(P^1)+\beta(P^2)  \le  \binom{\ell}{2m-\tau}-1$. So there exists a simple pair $(S,T)$ of size $2m-\tau$ which is not bad with respect to any $(L,P^i)$, a contradiction to the assumption. 
\end{proof}

	\begin{claim}
		\label{2x+yupper bound and xz at least one}
		For each $i \in \{0,1,2\}$, $2x^{(i)}+y^{(i)} \leq \ell + 2m-\tau-1$, and $x^{(i)},z^{(i)} \geq 1$.
	\end{claim}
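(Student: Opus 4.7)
The plan is to derive all three parts of the claim from the single fact $\beta(P^i)\ge 1$, which is provided by Claim \ref{second case half lemma}. By Claim \ref{claim-bad simple pair}, a bad simple pair of size $2m-\tau$ with respect to $(L,P^i)$ corresponds to an integer triple $(a,b,c)$ satisfying
$$0\le a\le x^{(i)},\ 0\le b\le y^{(i)},\ 0\le c\le z^{(i)},\ a+b+c=2m-\tau,\ 2a+b\ge A^{*},$$
where $A^{*}:=\max\{A,B\}$ with $A:=2x^{(i)}+y^{(i)}+2m+1-\ell-\tau$ and $B:=2m+1-\tau$. Let $M$ be the maximum of $2a+b$ over all triples satisfying only the box constraints and $a+b+c=2m-\tau$. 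The existence of at least one bad simple pair amounts to $M\ge A^{*}$. A direct computation (maximise $a$, then minimise $c$) gives
$$M=\begin{cases}2(2m-\tau)&\text{if }x^{(i)}\ge 2m-\tau,\\ x^{(i)}+2m-\tau&\text{if }x^{(i)}<2m-\tau\le x^{(i)}+y^{(i)},\\ 2x^{(i)}+y^{(i)}&\text{if }x^{(i)}+y^{(i)}<2m-\tau.\end{cases}$$

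With this computation in hand I would prove the three parts in the order $x^{(i)}\ge 1$, then $z^{(i)}\ge 1$, then the upper bound on $2x^{(i)}+y^{(i)}$. If $x^{(i)}=0$, the second or third case applies (since $2m-\tau\ge 2$) and in both $M\le 2m-\tau<B\le A^{*}$, contradicting $M\ge A^{*}$. If $z^{(i)}=0$, so $x^{(i)}+y^{(i)}=\ell$: the first case gives $A=x^{(i)}+2m+1-\tau$, and $M\ge A$ forces $x^{(i)}\le 2m-\tau-1$, contradicting the case hypothesis; the second case gives $M=x^{(i)}+2m-\tau<x^{(i)}+2m+1-\tau=A$ directly; and the third case forces $\ell<2m-\tau$, contradicting the standing hypothesis $\ell+\tau\ge 2m+2$. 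For the upper bound, the first case yields it immediately by rearranging $M\ge A$; the second case invokes the already-established $z^{(i)}\ge 1$, so $x^{(i)}+y^{(i)}\le \ell-1$, whence $2x^{(i)}+y^{(i)}\le (2m-\tau-1)+(\ell-1)=\ell+2m-\tau-2$; and the third case gives $2x^{(i)}+y^{(i)}\le 2(x^{(i)}+y^{(i)})\le 2(2m-\tau-1)=4m-2\tau-2$, which is at most $\ell+2m-\tau-1$ since $\ell\ge 2m-\tau$.

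The main obstacle is only the bookkeeping across the three cases; no individual inequality is subtle once $M$ has been computed. The one logical subtlety is that the upper-bound argument in the second case invokes $z^{(i)}\ge 1$, so one must establish $z^{(i)}\ge 1$ first, which fortunately can be done without reference to the upper bound.
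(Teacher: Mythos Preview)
Your proof is correct and follows essentially the same approach as the paper: both derive all three conclusions from the existence of a bad simple pair (Claim~\ref{second case half lemma}) together with the necessary condition in Claim~\ref{claim-bad simple pair}. Your packaging via the explicit maximum $M$ and a uniform three-case analysis is slightly more systematic than the paper's version (which for the upper bound goes back to $S_L(P^i)$ and condition~\ref{C5} directly rather than through Claim~\ref{claim-bad simple pair}), but the underlying arguments are the same.
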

	\begin{proof}
		If $S_{L}(P^i) \geq 2m|V(P^i)|+2m-\tau$, then for any simple pair $(S,T)$ of size $2m-\tau$, 
		$S_L(P^i) - dam_{L,P^i}(S,T) \ge 2m|V(P^i)|$ (as  $dam_{L,P^{i}}(S,T) \leq 2m-\tau$), and hence $(S,T)$ is not bad with respect to $(L,P^i)$ which means that $\beta(P^i)=0$, a contradiction.  Thus we may assume that  $S_{L}(P^i) \leq 2m|V(P^i)|+4m-1-2\tau$. By \ref{C5},		
		\begin{equation*}
		\label{2x+y-upperbound-6m-1}
		2x^{(i)}+y^{(i)} =dam_{L,P^i} (L(u),L(v)) \leq S_{L}(P^i) - 2m|V(P^i)|+ \ell+\tau - 2m \leq \ell+2m-1-\tau.
		\end{equation*}
		
		Assume $x^{(i)}=0$ for some $i \in \{0,1,2\}$,   say $x^{(0)}=0$,  then   for every simple pair $(S,T)$ of size $2m-\tau$, $dam_{L,P^0}(S,T) \leq 2m-\tau$. As $S_L(P^i)-2m|V(P^i)| \geq 2m-\tau$ (by \ref{C5}),  $(S,T)$ is not bad with respect to $(L,P^i)$, so $\beta(P^i)=0$, in contrary to Claim \ref{second case half lemma}.  Thus $x^{(i)} \geq 1$.
		
		Assume $z^{(0)}=0$, then $x^{(0)}+y^{(0)}=\ell$ and for any simple pair $(S,T)$ of size $2m-\tau$, $a^{(0)}(S,T)+b^{(0)}(S,T)=2m-\tau$. By Claim \ref{claim-bad simple pair}, we have 
		\begin{equation*}
		\begin{array}{ll} 
		2a^{(0)}(S,T)+b^{(0)}(S,T) & = a^{(0)}(S,T) +2m-\tau \\
		& \geq 2x^{(0)}+y^{(0)} +2m+1-\ell-\tau \\
		& = x^{(0)} + \ell +2m+1-\ell-\tau \\
		& = x^{(0)}+1 +2m-\tau.
		\end{array}
		\end{equation*}
		This implies that $a^{(0)}(S,T) \geq x^{(0)}+1$, in contrary to the fact that  $a^{(0)}(S,T) \leq x^{(0)}$. 
	\end{proof}

	\begin{claim} 
		\label{evey couple is HSL} 
		Every couple is heavy (respectively, safe) for at most one internal path. 
		There is at most one couple which is light for all internal paths. If there exists a couple which is light for at least two internal paths, then it is light for all internal paths.
	\end{claim}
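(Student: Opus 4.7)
The plan is to prove all four assertions of the claim by constructing, in each case, a simple pair of size $d=2$ whose combined damage vector contradicts either Claim \ref{all at least light} or Claim \ref{all at most light}. Both claims are available at $d=2$: Claim \ref{all at least light} requires $2\le \ell-2m+\tau$, which follows from $\ell+\tau\ge 2m+2$, while Claim \ref{all at most light} requires $2\le 2m-\tau$, which follows from $\tau\le 2m-2$. For any two distinct couples $\alpha,\beta$ with per-path damage vectors $d^\alpha=(d_0^\alpha,d_1^\alpha,d_2^\alpha)$ and $d^\beta$, Equation (\ref{comput for dam(S,T)}) gives the damage of the simple pair $(\alpha\cup\beta)$ as $d^\alpha+d^\beta$, so to avoid contradicting both claims simultaneously the coordinates of this sum must straddle $2$.

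For the heavy-at-most-one statement I would assume $\alpha$ is heavy on $P^i$ and $P^j$, so $d_i^\alpha=d_j^\alpha=2$, and pair it with a heavy couple on the remaining path $P^k$; such a $\beta$ exists by $x^{(k)}\ge 1$ from Claim \ref{2x+yupper bound and xz at least one}, and $\beta\ne\alpha$ unless $d_k^\alpha=2$, in which case any $\beta\ne\alpha$ works (using $\ell\ge 2m+2-\tau\ge 4$). The combined pair then has damage $\ge 2$ on every path, contradicting Claim \ref{all at least light}. The safe-at-most-one statement is entirely dual, pairing $\alpha$ (safe on $P^i,P^j$) with a safe couple on $P^k$ from $z^{(k)}\ge 1$ to produce combined damage $\le 2$ everywhere and contradict Claim \ref{all at most light}. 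Uniqueness of the $(1,1,1)$-couple is then immediate: two such couples pair to combined damage $(2,2,2)$, which violates both claims at once.

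The delicate step is the last assertion. Once the first two parts are established, every couple's damage vector is a permutation of $(2,1,0)$, $(2,1,1)$, $(1,1,1)$, or $(0,1,1)$. Suppose $\alpha$ has $d_i^\alpha=d_j^\alpha=1$ and $d_k^\alpha\in\{0,2\}$. If $d_k^\alpha=0$, I would seek $\beta\ne\alpha$ with $d_i^\beta,d_j^\beta\le 1$; then the combined damage is $\le 2$ on each path, contradicting Claim \ref{all at most light}. By the heavy-at-most-one result the couples heavy on $P^i$ and on $P^j$ are disjoint, so the candidate set has size $\ell-x^{(i)}-x^{(j)}$; this set contains $\alpha$ and also contains every heavy-on-$P^k$ couple (each such couple has $d_k=2$, hence $d_i,d_j\le 1$). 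Since $x^{(k)}\ge 1$ and $\alpha$ is not heavy on $P^k$, the set has size $\ge 2$ and a valid $\beta$ exists. The case $d_k^\alpha=2$ is dual: I would seek $\beta\ne\alpha$ with $d_i^\beta,d_j^\beta\ge 1$ to contradict Claim \ref{all at least light}, and the symmetric counting through the safe-at-most-one result and $z^{(k)}\ge 1$ supplies the required $\beta$. The main technical obstacle is precisely this final pigeonhole: one must exploit the disjointness provided by the earlier parts of the claim together with Claim \ref{2x+yupper bound and xz at least one} to guarantee that the auxiliary couple $\beta$ can be chosen distinct from $\alpha$.
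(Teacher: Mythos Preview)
Your proposal is correct and follows essentially the same approach as the paper: in each case you build a size-$2$ simple pair by coupling $\alpha$ with a couple that is heavy (resp.\ safe) on the remaining path, using $x^{(k)}\ge 1$ (resp.\ $z^{(k)}\ge 1$) from Claim~\ref{2x+yupper bound and xz at least one}, and then invoke Claim~\ref{all at least light} or Claim~\ref{all at most light}. The only difference is cosmetic---you phrase the argument via damage vectors and a size count of the candidate set, whereas the paper names the auxiliary couple directly and appeals to the already-proved ``at most one heavy/safe path'' to check its damages on $P^i,P^j$---but the underlying logic is identical.
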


\begin{proof}
	Assume to the contrary,   $c_jc'_j$ is heavy for two paths, say for both $P_0$ and $P_1$. If $c_jc'_j$ is also heavy for $P^2$, then for any other couple $c_kc'_k$, we know that $(\{c_j,c_k\},\{c'_j,c'_k\})$ is a simple pair of size $2$ contradicting to Claim \ref{all at least light}. Thus $c_jc'_j$ is not heavy for $P^2$. Note that $x^{(2)} \geq 1$, there exists a couple $c_kc'_k$ which is heavy for $P^2$. It follows that $(\{c_j,c_k\},\{c'_j,c'_k\})$ is a simple pair of size $2$ contradicting to Claim \ref{all at least light}. 
	Similarly, we can prove that no couple is safe for at least two internal paths.

  If there are  two couples which are light for all internal paths, then two such couples comprise a simple pair of size $2$ which contradicts  to Claim \ref{all at most light}. 
  
  Assume the last sentence of this claim is not true, say $c_jc'_j$ is light for $P^0$ and $P^1$ but not light for $P^2$. Note that by Claim \ref{2x+yupper bound and xz at least one}, $z^{(2)} \geq 1$, so if  $c_jc'_j$ is heavy for $P^2$, then  there exists a distinct couple $c_kc'_k$ which is safe for $P^2$. By the first part of this claim, $c_kc'_k$ is safe for neither $P^0$ nor $P^1$. Then  $(\{c_j,c_k\},\{c'_j,c'_k\})$ is a simple pair of size $2$ contradicting to  Claim \ref{all at least light}. If $c_jc'_j$ is safe for $P^2$, then since $x^{(2)} \geq 1$ (by Claim \ref{2x+yupper bound and xz at least one}), there exists a distinct couple $c_kc'_k$ which is heavy for $P^2$. By the first part of this claim,  $c_kc'_k$ is heavy for neither $P^0$ nor $P^1$. Then  $(\{c_j,c_k\},\{c'_j,c'_k\})$ is a simple pair  of size $2$  contradicting to  Claim \ref{all at most light}.  
	This completes the proof of Claim \ref{evey couple is HSL}. 
\end{proof}

	\medskip
	
	Without loss of generality, we may assume that $c_0c'_0$ is heavy for $P^0$, light for $P^1$ and safe for $P^2$.  
	
	\begin{claim}
		\label{LuLv-HSL}
	For any couple $c_jc'_j$, 
		\begin{itemize}
			\item if it is heavy for $P^0$, then it is light for $P^1$, safe for $P^2$;
			\item if it is light for $P^0$, then it is safe for $P^1$, heavy for $P^2$;
			\item if it is safe for $P^0$, then it is heavy for $P^1$, light for $P^2$.
		\end{itemize}
	Consequently, $x^{(0)}=y^{(1)}=z^{(2)}$, $y^{(0)}=z^{(1)}=x^{(2)}$ and $z^{(0)}=x^{(1)}=y^{(2)}$. 
	\end{claim}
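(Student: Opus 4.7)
The plan is to argue by contradiction, using forbidden-pair consequences of Claims \ref{all at least light} and \ref{all at most light} for simple pairs of size $2$, together with the existence bounds from Claim \ref{2x+yupper bound and xz at least one}. By Claim \ref{evey couple is HSL}, the classification triple $(\alpha_0,\alpha_1,\alpha_2)\in\{H,L,S\}^3$ of any couple has at most one coordinate equal to $H$, at most one equal to $S$, and either at most one equal to $L$ or all three equal to $L$. Hence every couple belongs to one of seven types: the six permutations of $(H,L,S)$, or the triple $(L,L,L)$.

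Next I would read off the ``forbidden type-pairs'' from the size-$2$ claims. By Equation \eqref{comput for dam(S,T)}, the dam-vector on $(P^0,P^1,P^2)$ of a size-$2$ simple pair is the coordinatewise sum of the two couples' individual dam-vectors (with $H$ contributing $2$, $L$ contributing $1$, $S$ contributing $0$). Since in the present inductive regime $2\le \ell-2m+\tau$ and $2\le 2m-\tau$, Claim \ref{all at least light} forces $\min_i d_i\le 1$ and Claim \ref{all at most light} forces $\max_i d_i\ge 3$ for every such pair. A direct check shows that exactly three pairs of distinct types produce the flat dam-vector $(2,2,2)$ and are therefore forbidden:
\[
(H,L,S)\text{ with }(S,L,H),\quad (H,S,L)\text{ with }(S,H,L),\quad (L,H,S)\text{ with }(L,S,H).
\]

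Since $c_0c'_0$ has type $(H,L,S)$, the first forbidden pair immediately excludes all $(S,L,H)$-couples. To exclude $(H,S,L)$-couples, suppose one exists; then by the second forbidden pair no couple has type $(S,H,L)$, so every $P^1$-heavy couple has type $(L,H,S)$. The bound $x^{(1)}\ge 1$ from Claim \ref{2x+yupper bound and xz at least one} produces such a couple, and the third forbidden pair then excludes $(L,S,H)$. But the only $P^2$-heavy types are $(L,S,H)$ and $(S,L,H)$, both now absent, giving $x^{(2)}=0$ and contradicting Claim \ref{2x+yupper bound and xz at least one}. A symmetric chain starting from an assumed $(L,H,S)$-couple likewise forces $x^{(2)}=0$.

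The step I expect to be the main obstacle is ruling out the type $(L,L,L)$: the unique such couple (by Claim \ref{evey couple is HSL}) would raise every $y^{(i)}$ by $1$ while leaving $x^{(i)}$ and $z^{(i)}$ unchanged, and its dam-sum with any of the three remaining allowed types is a permutation of $(3,2,1)$, so no size-$2$ forbidden pair involves it. To finish, I would quantify $\beta(P^i)$ via Claim \ref{claim-bad simple pair} and the sharp bound from Lemma \ref{main-lemma}: with the restricted inventory of types $\{(H,L,S),(L,S,H),(S,H,L),(L,L,L)\}$ and counts $n_1,n_4,n_5,n_L$, the presence of an $(L,L,L)$-couple inflates the value of $2x^{(i)}+y^{(i)}$ by $1$ for each $i$ and pushes $\beta(P^i)$ past the permitted $\frac{1}{2}\binom{\ell}{2m-\tau}-1$, contradicting Claim \ref{second case half lemma}. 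Once $n_L=0$ is established, the listed equalities follow by direct counting: $x^{(0)}=n_1=y^{(1)}=z^{(2)}$, $y^{(0)}=n_4=z^{(1)}=x^{(2)}$, and $z^{(0)}=n_5=x^{(1)}=y^{(2)}$.
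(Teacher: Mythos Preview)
Your reduction of the problem to seven possible type-triples and your elimination of the three ``wrong-parity'' permutations via size-$2$ forbidden pairs is correct, and it is essentially the same mechanism the paper uses (the paper carries it out couple by couple rather than type by type, but the content is the same: combine the anchor $c_0c'_0$ with Claims~\ref{all at least light}, \ref{all at most light} and the existence guarantees $x^{(i)},z^{(i)}\ge 1$ from Claim~\ref{2x+yupper bound and xz at least one}).

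The genuine gap is your treatment of the $(L,L,L)$ type. Your proposed argument---that an $(L,L,L)$-couple ``pushes $\beta(P^i)$ past the permitted $\tfrac12\binom{\ell}{2m-\tau}-1$''---cannot work, because the bound in Claim~\ref{second case half lemma} is a consequence of Lemma~\ref{main-lemma}, which holds for \emph{all} admissible values of $(x^{(i)},y^{(i)})$. In particular it already covers the situation where $y^{(i)}$ has been increased by~$1$; there is no way to exceed an inequality that is proved unconditionally. You also correctly observe that no size-$2$ forbidden pair involves the $(L,L,L)$ type, so your size-$2$ machinery is simply too weak here.

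The paper handles this case differently and by an explicit case split on $\tau$ and $\ell$. If $\tau\le 2m-6$ one forms the size-$4$ simple pair consisting of the putative $(L,L,L)$-couple together with one couple each of types $(H,L,S)$, $(S,H,L)$, $(L,S,H)$ (which exist once the other three permutations are excluded, via $x^{(i)}\ge 1$); this pair has dam-vector $(4,4,4)$ and contradicts Claim~\ref{all at most light}. If $\tau=2m-2$ and $\ell\ge 6$, the same four couples give a size-$4$ pair contradicting Claim~\ref{all at least light}. Finally, in the corner case $\tau=2m-2$, $\ell=4$ neither claim applies, and the paper instead exploits the special structure of $P^0$ (a single vertex, so $\hat X^0_1=\hat X^0_n=\emptyset$) together with Observation~\ref{obs-couple} to build the required $(S,T)$ by hand. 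You will need to supply this case analysis, or something equivalent, to close the argument.
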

\begin{proof}	
	If $c_jc'_j$ is safe for $P^0$, 
	then  $c_jc'_j$   is light for $P^2$,  for otherwise by Claim \ref{evey couple is HSL},  $c_jc'_j$   is heavy for $P^2$, and light for $P^1$. Then  $(\{c_0,c_j\},\{c'_0,c'_j\})$ is a  simple pair of size $2$ which contradicts to Claim \ref{all at most light}. 	By Claim \ref{evey couple is HSL}, 
	$c_jc'_j$ is heavy for $P^1$. 
	
	By Claim \ref{2x+yupper bound and xz at least one}, $z^{(0)} \geq 1$, there is a couple $c_ic'_i$, which  is   safe for $P^0$. Hence $c_ic'_i$   is light for $P^2$ and  {heavy} for $P^1$.

 Also by Claim \ref{2x+yupper bound and xz at least one},  $z^{(1)} \geq 1$,   there exists a couple $c_kc'_k$ which is safe for $P^1$.
  Then $c_kc'_k$   is light for $P^0$,   otherwise by Claim \ref{evey couple is HSL},  $c_kc'_k$   is heavy for $P^0$, and light for $P^2$. Then  $(\{c_i,c_k\},\{c'_i,c'_k\})$ is a  simple pair of size $2$ which contradicts to Claim \ref{all at most light}. 
  By Claim \ref{evey couple is HSL}, 
  $c_kc'_k$ is heavy for $P^2$. 

 If  $c_jc'_j$ is heavy for $P^0$, then it is light for $P^1$, for otherwise, it is safe for $P^1$, light for $P^2$, but then $(\{c_i,c_j\},\{c'_i,c'_j\})$ is a  simple pair of size $2$ which contradicts to Claim \ref{all at most light}. 
 
	Next we show that no couple is light for all internal paths. 
	Assume that there exist a couple $cc'$ which is light for all internal paths. If $\tau  \leq 2m-4$, then $2m-\tau \geq 4$, so $(\{c_0,c_i,c_k,c\},\{c'_0,c'_i,c'_k,c'\})$ is a simple pair of size $4$   contradicting to Claim \ref{all at most light}.
	
	Assume $\tau =2m-2$. Recall that $\ell+\tau \geq 2m+2$, so if $\ell \neq 4$, then $\ell \geq 6$ and $\ell-2m+\tau \geq 4$. Thus $(\{c_i,c_j,c_k,c\},\{c'_i,c'_j,c'_k,c'\})$ is a simple pair of size $4$ which contradicts to Claim \ref{all at least light}. 
	
	Assume $\tau =2m-2$ and $\ell=4$.   Since $|V(P^0)|=1$, we know that $\hat{X}^0_1=\hat{X}^0_n=\emptyset$. By Observation \ref{obs-couple}, $c_0,c'_0 \in \Lambda^0$ and  $c_0 \neq c'_0$. As $c_0c'_0$ is light for $P^1$, by Observation \ref{obs-couple}, $c_0 \notin$ $\hat{X}^1_1 \cup \Lambda^1$ or $c'_0 \notin \hat{X}^1_n \cup \Lambda^1$.  By symmetric, we may assume that $c_0 \notin$ $\hat{X}^1_1 \cup \Lambda^1$. Then for $S=\{c_0,c\}$ and $T=\{c'_i,c'\}$, the conclusion of Theorem \ref{main-second-part-stronger} holds. 
	
	If $c_jc'_j$ is light for $P^0$,   as   $c_jc'_j$ is not light for all internal paths,  we conclude that  it must be safe for $P^1$, for otherwise,
	$c_jc'_j$ is heavy for $P^1$ and  $(\{c_k,c_j\},\{c'_k,c'_j\})$ is a  simple pair of size $2$  contradicting to Claim \ref{all at most light}. This implies that $c_jc'_j$ heavy for $P^2.$ 
\end{proof}

	\begin{claim}
		For each $i \in \{0,1,2\}$, $x^{(i)},y^{(i)},z^{(i)} \geq2$.
	\end{claim}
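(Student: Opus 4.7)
The proof proceeds by contradiction. Claim \ref{LuLv-HSL} partitions the nine quantities $\{x^{(i)}, y^{(i)}, z^{(i)}\}_{i=0,1,2}$ into three equal triples, so it is enough to show $x^{(0)}, y^{(0)}, z^{(0)} \ge 2$; combining Claim \ref{2x+yupper bound and xz at least one} with the equalities $x^{(0)}=y^{(1)}=z^{(2)}$, $y^{(0)}=z^{(1)}=x^{(2)}$, $z^{(0)}=x^{(1)}=y^{(2)}$ already yields $x^{(0)}, y^{(0)}, z^{(0)} \ge 1$. Moreover, the cyclic relabeling $(P^0, P^1, P^2) \mapsto (P^1, P^2, P^0)$ permutes the three admissible types $(\mathrm{H},\mathrm{L},\mathrm{S})$, $(\mathrm{L},\mathrm{S},\mathrm{H})$, $(\mathrm{S},\mathrm{H},\mathrm{L})$ cyclically and simultaneously cycles $(x^{(0)}, y^{(0)}, z^{(0)})$, so it suffices to rule out the single case $x^{(0)} = 1$.

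Assuming $x^{(0)} = 1$, let $c_0 c'_0$ be the unique couple of type $(\mathrm{H},\mathrm{L},\mathrm{S})$; the remaining $\ell-1$ couples split into $y^{(0)}$ of type $(\mathrm{L},\mathrm{S},\mathrm{H})$ and $z^{(0)}$ of type $(\mathrm{S},\mathrm{H},\mathrm{L})$. The key structural observation is a \emph{rigidity of bad pairs}: for any bad simple pair $(S,T)$ of size $2m-\tau$ with respect to $(L, P^0)$, Claim \ref{claim-bad simple pair} gives $2a^{(0)}(S,T) + b^{(0)}(S,T) \ge 2m+1-\tau$, and combined with $a^{(0)} \le x^{(0)} = 1$ together with $a^{(0)}+b^{(0)} \le 2m-\tau$, this forces $a^{(0)}=1$. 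Hence every bad pair for $P^0$ contains $c_0 c'_0$. The analogous argument on $P^1$ (where $y^{(1)}=1$, with the unique light couple being $c_0c'_0$) and on $P^2$ (where $z^{(2)}=1$, with the unique safe couple being $c_0c'_0$) forces every bad simple pair for either path to contain $c_0 c'_0$ as well, and pins down the multiset of types of the other $2m-\tau-1$ couples in each bad pair.

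With this rigidity in hand, I would finish in one of two complementary ways depending on the parameter regime. When $\ell - 2m + \tau \ge 2$, pairs of size $2$ are within the scope of Claim \ref{all at least light}, and I would combine $c_0 c'_0$ with an $(\mathrm{L},\mathrm{S},\mathrm{H})$ couple $c_k c'_k$ and an $(\mathrm{S},\mathrm{H},\mathrm{L})$ couple $c_j c'_j$ (existing because $y^{(0)}, z^{(0)}\ge 1$) to assemble a small simple pair whose damage triple falls into the forbidden range of Claim \ref{all at least light} or \ref{all at most light}; the two-element pair $\{c_k c'_k, c_j c'_j\}$ already has damage triple $(1,2,3)$ on $(P^0,P^1,P^2)$ and can be augmented to hit any target range. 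In the boundary case $\ell - 2m + \tau = 0$ I would instead count bad simple pairs directly: by the rigidity each $\beta(P^i)$ reduces to a one-dimensional binomial sum in $y^{(0)}$ and $z^{(0)}$, which I expect to be bounded above by $1$, contradicting $\beta(P^i) \ge 2$ from Claim \ref{second case half lemma}. The main obstacle I anticipate is precisely this boundary case, because the counting bound has to be sharp; I will need to use the upper bound $2x^{(i)} + y^{(i)} \le \ell + 2m - \tau - 1$ from Claim \ref{2x+yupper bound and xz at least one} to restrict the admissible $(y^{(0)}, z^{(0)})$, and to split on the parity of $\ell-1$ so that the forced values of $b^{(0)}$ in a bad pair of size $2m-\tau$ are compatible with $b^{(0)}\le y^{(0)}$.
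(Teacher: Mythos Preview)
Your reduction to $x^{(0)}=1$ by the cyclic symmetry is fine, and your rigidity observation for $P^0$ (every bad simple pair for $P^0$ must have $a^{(0)}=1$, hence $b^{(0)}=2m-\tau-1$, $c^{(0)}=0$) is correct. But the rest of the plan has two genuine gaps.

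First, your primary route via Claims~\ref{all at least light}/\ref{all at most light} cannot succeed. By Claim~\ref{LuLv-HSL} every couple contributes total damage exactly $3$ across $P^0,P^1,P^2$, so any simple pair of size $d$ has damage triple $(d_0,d_1,d_2)$ with $d_0+d_1+d_2=3d$. Thus ``$d_i\ge d$ for all $i$'' and ``$d_i\le d$ for all $i$'' both force $d_0=d_1=d_2=d$; writing $a,b,c$ for the number of HLS, LSH, SHL couples in the pair, the system $2a+b=a+2c=2b+c=d$ gives $a=b=c=d/3$. With $x^{(0)}=1$ this forces $a\le 1$, hence $d\le 3$, but $d$ must be even and $\ge 2$, so $d=2$ and $a=2/3$, impossible. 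No simple pair of the allowed types ever violates Claim~\ref{all at least light} or~\ref{all at most light}, so that approach is a dead end rather than a reduction.

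Second, your ``boundary case $\ell-2m+\tau=0$'' does not occur: the standing hypothesis at this point is $\ell+\tau\ge 2m+2$. The counting approach you relegate to that case is in fact the correct one for the whole argument, and it is exactly what the paper does. Note also that your rigidity claim for $P^2$ is mis-stated: since $z^{(2)}=1$ and safe couples contribute $0$ damage, bad pairs for $P^2$ must \emph{exclude} $c_0c'_0$, not contain it. The paper proceeds as follows: from $2x^{(2)}+y^{(2)}\le \ell+2m-\tau-1$ and Claim~\ref{LuLv-HSL} one gets $y^{(0)}\le 2m-\tau$. If $y^{(0)}\le 2m-\tau-2$ then $2a^{(0)}+b^{(0)}\le 2+ y^{(0)}\le 2m-\tau$ for every simple pair, so $\beta(P^0)=0$; if $y^{(0)}=2m-\tau-1$ then your $P^0$-rigidity gives a unique bad pair, $\beta(P^0)=1$; if $y^{(0)}=2m-\tau$ then the correct $P^2$-rigidity ($c^{(2)}=0$, $a^{(2)}=x^{(2)}=y^{(0)}$, $b^{(2)}=0$) leaves a unique bad pair for $P^2$, $\beta(P^2)=1$. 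Each case contradicts Claim~\ref{second case half lemma}.
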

	
	\begin{proof}
	Assume to the contrary that $x^{(0)}=1$.
	As $x^{(0)}+y^{(0)}+z^{(0)} = \ell$, we have
	 $z^{(0)}=\ell -y^{(0)}-1$.   
	 By Claim \ref{2x+yupper bound and xz at least one} and Claim \ref{LuLv-HSL}, $2y^{(0)}+z^{(0)}=2x^{(2)}+y^{(2)} \leq \ell +2m-\tau-1$. So $2y^{(0)}+(\ell -1 -y^{(0)}) \leq \ell +2m-\tau -1$,  which implies that $y^{(0)} \leq 2m-\tau$.  
	 
	 If $y^{(0)} \leq 2m-\tau- 2$, then for any simple pair $(S,T)$,  
	 $$2a^{(0)}(S,T)+b^{(0)}(S,T)\leq 2x^{(0)} +y^{(0)} \le 2 + 2m-\tau-2 = 2m-\tau.$$ 
	 This implies that $(S,T)$ is not bad with respect to $(L,P^0)$. Hence  $\beta(P^0)=0$, in contrary to Claim \ref{second case half lemma}.   So we have $y^{(0)} \geq 2m-\tau-1$, thus $y^{(0)}= 2m-\tau-1$ or $y^{(0)}=2m-\tau$. 
		
	If $y^{(0)} = 2m-\tau-1$, then a bad simple pair with respect to $(L,P^0)$  consists  of the unique couple which is heavy for $P^0$ and the exactly $2m-\tau-1$ couples which are light for $P^0$. So
	 $\beta(P^0)=1$, in contrary to Claim \ref{second case half lemma}.

		Assume $y^{(0)} =2m-\tau$. Suppose $(S,T)$ is a bad simple pair with respect to $(L,P^2)$. By Claim \ref{claim-bad simple pair},  $$2a^{(2)}(S,T)+b^{(2)}(S,T) \geq \max\{2x^{(2)}+y^{(2)}+2m-\tau+1-\ell,2m-\tau+1\}.$$ 	 By Claim \ref{LuLv-HSL}, $x^{(2)}=y^{(0)}$ and $y^{(2)}=z^{(0)}=\ell-x^{(0)}-y^{(0)}=\ell-1-y^{(0)}$. Hence,  \begin{equation}
		\label{claim8-eq}
			2a^{(2)}(S,T)+b^{(2)}(S,T) \geq y^{(0)}+2m-\tau = 4m-2\tau.
		\end{equation}
		 As $a^{(2)}(S,T)+b^{(2)}(S,T)=2m-\tau-c^{(2)}(S,T) \leq 2m-\tau$, we have  $2a^{(2)}(S,T)+b^{(2)}(S,T) \leq 2x^{(2)}+(2m-\tau-x^{(2)}) = x^{(2)}+2m-\tau= 4m-2\tau$. Together with Inequality (\ref{claim8-eq}), we have $2a^{(2)}(S,T)+b^{(2)}(S,T)=4m-2\tau$ and hence $a^{(2)}(S,T)=x^{(2)}=2m-\tau$, i.e., a bad simple with respect to $(L,P^2)$ consists of exactly the $2m-\tau$ couples which are  heavy for $P^2$. So $\beta(P^2)=1$, in  contrary to Claim \ref{second case half lemma}. 
	\end{proof}

Without loss of generality, we assume that
\begin{itemize}
	\item $c_0c'_0$ and $c_1c'_1$ are  heavy for $P^0$ (and hence light for $P^1$ and safe for $P^2$ by Claim \ref{LuLv-HSL}).
	\item $c_2c'_2$ and $c_3c'_3$ are   light for $P^0$  (and hence  safe for $P^1$ heavy for $P^2$).
	\item  $c_4c'_4$ and $c_5c'_5$ are  safe for $P^0$ (and hence heavy for $P^1$ and light for $P^2$).
\end{itemize}   

 As $|V(P^0)|=1$, we know that $\hat{X}^0_1=\hat{X}^0_n=\emptyset$.
 	By Observation \ref{obs-couple},
      $c_0 \neq c'_0$ and $c_1 \neq c'_1$, and  we may   assume that $c_0 \notin $$\hat{X}^1_1 \cup \Lambda^1$.
      
       Let $S_1=\{c_0,c_4\}$, $T_1=\{c'_2,c'_4\}$,  and let $S_3=\{c_0,c_1,\ldots,c_5\}$, $T_3=\{c'_0,c'_1,\ldots,c'_5\}$. If $c_1\notin \hat{X}^1_1 \cup \Lambda^1$, then let $S_2=\{c_0,c_1,c_4,c_5\}$, $T_2=\{c'_2,c'_3,c'_4,c'_5\}$. Otherwise, 	 $c'_1\notin \hat{X}^1_n \cup \Lambda^1$ (again by Observation \ref{obs-couple}),  we let $S_2=\{c_0,c_2,c_4,c_5\}$, $T_2=\{c'_1,c'_3,c'_4,c'_5\}$.

 Now we show that for each $i\in \{0,1,2\}$, $j \in \{1,2,3\}$, $dam_{L,P^i}(S_j,T_j) \leq 2j$.
 
  For each  $i \in \{0,1,2\}$, among the six couples $c_0c'_0, \ldots, c_5c'_5$, two are light, two are safe and two are heavy  for $P^i$.   Therefore, $dam_{L,P^i}(S_3, T_3)=6$. 
  
 As $c_4c'_4$ is safe for $P^0$,
 $dam_{L,P^0}({c}_4, {c'}_4)=0$. Hence 
 $dam_{L,P^0}(S_1, T_1)=dam_{L,P^0}(c_0, c'_2) \le 2$. As $c_0\notin \hat{X}^1_1 \cup \Lambda^1$
 and $c_2c'_2$ is safe for $P^1$, we have 
  $dam_{L,P^1}(c_0, {c'}_2) =0$.  Hence 
  $dam_{L,P^1}(S_1, T_1)=dam_{L,P^1}({c}_4, {c'}_4) = 2$. As $c_0c'_0$ is safe for $P^2$ and $c_4c'_4$ is light for $P^2$, we have
   $dam_{L,P^2}({c}_0, {c'}_2) =1$ and $dam_{L,P^2}({c}_4, {c'}_4) =1$. Thus 
   $dam_{L,P^2}(S_1, T_1)=2$.
   
  Consider the case that  $c_1\notin \hat{X}^1_1 \cup \Lambda^1$. As $c_4c'_4,c_5c'_5$ are safe for $P^0$, we conclude that 
  $dam_{L,P^0}(S_2, T_2)=dam_{L,P^0}({c}_0, {c'}_2)+
 dam_{L,P^0}({c}_1, {c'}_3) \le 4$. As $c_0,c_1 \notin \hat{X}^1_1 \cup \Lambda^1$ and $c_2c'_2, c_3c'_3$ are safe for $P^1$, it follows that 
  $dam_{L,P^1}(S_2, T_2)=dam_{L,P^1}({c}_4, {c'}_4)+
  dam_{L,P^1}({c}_5, {c'}_5)  = 4$. As $c_0c'_0, c_1c'_1$ are safe for $P^2$, it follows that 
   $dam_{L,P^2}(c_0,c'_2)+ dam_{L,P^2}(c_1,c'_3) \le 2$. As $c_4c'_4, c_5c'_5$ are light for $P^2$, we have
   $dam_{L,P^2}(S_2, T_2) \le 2+ dam_{L,P^2}({c}_4, {c'}_4)+
  dam_{L,P^2}({c}_5, {c'}_5)  = 4$.
The other case is verified similarly and the details are omitted. 
	
	If $\tau \leq 2m-6$, then we know that $(S_3,T_3)$ is a simple pair of size $6$ which contradicts to Claim \ref{all at most light}. So we have that $\tau \in \{2m-4,2m-2\}$. 
	If $\tau =2m-4$,  then we let $S=S_2$ and $T=T_2$ and if $\tau=2m-2$, then we let $S=S_1$ and $T=T_1$. In each case,    $|S|=|T| = 2m-\tau$ and by the arguments above, $dam_{L, P^i}(S,T) \leq 2m-\tau$  for $i=0,1,2$.
	By \ref{C5}, for $i=0,1,2$,   $$dam_{L,P^i}(S,T) \leq 2m-\tau \leq   S_{L}(P^i)-2m|V(P^i)|.$$ 
	
	This completes the proof of Theorem \ref{main-second-part-stronger}.

\section{Proof of Lemma \ref{main-lemma}}
 
This section  proves Lemma \ref{main-lemma}.  I.e., 
\begin{equation}
	\label{orginal-def-lemma} 
	F(x,y)=\sum \binom{x}{a}\binom{y}{b}\binom{\ell-x-y}{2m-\tau-a-b} \le \frac 12 {\ell \choose 2m-\tau}-1,
\end{equation}
where $x+y \le \ell, 2x+y \le \ell+2m-\tau-1$, $m\geq 1$, $0 \leq \ell \leq 4m$, $0 \leq \tau \leq 2m-2$, $\ell +\tau \geq 2m+2$,   $\ell$ and $\tau$ are both even,
	and the summation is over  non-negative integer pairs $(a,b)$ for which $0 \leq a \leq x$, $0 \leq b \leq y$, $a+b \le 2m-\tau$ and  $2a+b \geq \max\{2x+y+2m-\tau+1-\ell, 2m-\tau+1\}$.

 Note that $a+b \le 2m-\tau$ and  $2a+b \geq  2m-\tau+1$ implies that $a \ge 1$.

     In the sequel, we define 
 	$$\binom{p}{q}_+=
 	\begin{cases}
 	\binom{p}{q} & \text{if $p \geq q \geq 0$,}\\
 	0 & \text{if $q<0$ or $p <q$.}
 	\end{cases}$$ 
For convenience, we allow $p<q$ or $q<0$ in the binormial coefficient in the summations below.   It is easy to check in these cases,  either the pair $(a,b)$ does not lie in the range of the summation, and hence  contributes $0$ to the summations, or by extending  the equality $\binom{p+1}{q}=\binom{p}{q}+\binom{p}{q-1}$ to $q=0$. For the readability, we suppress the index `$+$'.

 First, we analyze the monotonicity  of $F(x,y)$ about $y$ when $x$ is fixed, say $x=x_0$. For convenience, we let $2k = 2m-\tau$.
	\begin{lemma}
		\label{monotonous}
		Assume $x=x_0$ is fixed.      
		\begin{itemize}
			\item If $y \ge \ell-2x_0$, then $F(x_0,y+1)\leq F(x_0,y)$.
			\item If $ y < \ell -2x_0$, then $F(x_0,y) \leq F(x_0,y+1)$.
		\end{itemize} 
	\end{lemma}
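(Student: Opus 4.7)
The plan is to compute $F(x_0,y+1)-F(x_0,y)$ directly, using Pascal's identity to re-express both summands with the common denominator index $\ell-x_0-y-1$, and then to perform a single index substitution $b \mapsto b+1$ to make the resulting sums telescope. Write $2k = 2m-\tau$ and denote the threshold as $A(y) = \max\{2x_0+y+2k+1-\ell,\; 2k+1\}$. The dichotomy in the lemma corresponds exactly to whether $A(y)$ is governed by the first or the second argument of the $\max$: when $y<\ell-2x_0$ one has $A(y)=A(y+1)=2k+1$, and when $y\geq \ell-2x_0$ one has $A(y)=2x_0+y+2k+1-\ell$ with $A(y+1)=A(y)+1$.

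First I would apply $\binom{y+1}{b}=\binom{y}{b}+\binom{y}{b-1}$ inside $F(x_0,y+1)$, and $\binom{\ell-x_0-y}{2k-a-b}=\binom{\ell-x_0-y-1}{2k-a-b}+\binom{\ell-x_0-y-1}{2k-a-b-1}$ inside $F(x_0,y)$. Then I would perform the shift $b \mapsto b+1$ in the cross term involving $\binom{y}{b-1}$; this turns its summand into $\binom{x_0}{a}\binom{y}{b}\binom{\ell-x_0-y-1}{2k-a-b-1}$, but relaxes the constraint from $2a+b\geq A(y+1)$ to $2a+b\geq A(y+1)-1$.

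In Case 1 ($y<\ell-2x_0$), $A(y)=A(y+1)=2k+1$, so the shifted cross sum has range $2a+b\geq 2k$ while the remaining subtracted sum has range $2a+b\geq 2k+1$ with the same summand; their difference is the single stratum $2a+b=2k$, giving
\[
F(x_0,y+1)-F(x_0,y)=\sum_{2a+b=2k}\binom{x_0}{a}\binom{y}{b}\binom{\ell-x_0-y-1}{a-1}\;\geq\;0.
\]
In Case 2 ($y\geq \ell-2x_0$), the same shift makes the shifted cross sum and the subtracted sum line up with identical constraint $2a+b\geq A(y)$, so they cancel, leaving only the negative boundary contribution
\[
F(x_0,y+1)-F(x_0,y)=-\sum_{2a+b=A(y)}\binom{x_0}{a}\binom{y}{b}\binom{\ell-x_0-y-1}{2k-a-b}\;\leq\;0.
\]

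The main obstacle is bookkeeping: I must check that the shift $b\mapsto b+1$ matches the constraint shift exactly and that nothing is miscounted at the boundaries $a=0$, $b=0$, or $b=y$. This is handled uniformly by the convention $\binom{p}{q}_+=0$ for $q<0$ or $q>p$ already fixed in the paper, which in particular makes the two formulas above valid without needing a separate treatment at the transition $y=\ell-2x_0$ where Cases 1 and 2 meet.
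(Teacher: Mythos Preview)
Your argument is correct and uses the same engine as the paper: expand $\binom{y+1}{b}$ and $\binom{\ell-x_0-y}{2k-a-b}$ by Pascal, shift $b\mapsto b+1$ in the term carrying $\binom{y}{b-1}$, and observe that everything cancels except a single stratum $2a+b=\text{const}$. The paper arrives at the same identities but organises the computation differently: it fixes $a=i$, defines a per-$i$ difference $\Delta_i$, and simplifies each $\Delta_i$ to a single nonnegative product; summing $\binom{x_0}{i}\Delta_i$ over $i$ recovers exactly your single-stratum sums (with $b=2k-2i$ in your Case~1 and $b=A(y)-2i$ in your Case~2). Your double-sum presentation is a bit cleaner because it avoids the separate treatment of the boundary index $i=t(y)$ that the paper carries out in its Case~1, and it yields closed formulas for $F(x_0,y+1)-F(x_0,y)$ rather than the inequality the paper records; conversely, the paper's per-$i$ decomposition makes the nonnegativity visible term by term without needing to track how the global constraint $2a+b\ge A(\cdot)$ shifts. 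Both routes are short and rely only on Pascal and the $\binom{p}{q}_+$ convention already fixed in the paper.
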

	\begin{proof}
		In the following, let $z=\ell-x_0-y$.
		
		\noindent	
		{\bf Case 1.} $\ell-2x_0 \leq y \leq  \ell+ 2m-\tau-1-2x_0  = \ell+2k-1-2x_0$.
		
	In this case, $\max \{2x_0+y+2m-\tau+1-\ell, 2m-\tau+1\} = 2x_0+y+2m-\tau+1-\ell  =2x_0+y+2k+1-\ell$.
		For brevity, let
	$$
		 t(y) =   2x_0+y+1-\ell,  \ \  
	   r(y) =    t(y)+2k.
	$$
	 
		   As $2a+b \geq   r(y)$ and  $a+b \leq 2m-\tau =2k$, we have $a \geq r(y)-2k =t(y)$  and $  r(y) - 2a \le b \le 2k-a$.	 
		So    
		$$F(x_0,y)  
		 =\sum\limits_{i=t(y)}^{x_0}\sum\limits_{j=r(y)-2i}^{2k-i}\binom{x_0}{i}\binom{y}{j}\binom{z}{2k-i-j}.
		$$ 
		
		Note that $r(y+1)=r(y)+1$ and $t(y+1) = t(y)+1 $. Let $\Delta  = F(x_0,y) -F(x_0,y+1)$
		and for $t(y+1) \le i \le x_0$, let
	\begin{equation}
	\label{eq-lem9-1}
		\Delta_i=\sum\limits_{j=r(y)-2i}^{2k-i}\binom{y}{j}\binom{z}{2k-i-j}-\sum\limits_{j=r(y)+1-2i}^{2k-i}\binom{y+1}{j}\binom{z-1}{2k-i-j}.
	\end{equation}
	
	 Note that $2k-t(y)=r(y)-2t(y)$, we have 	
	 $$\Delta   \geq \sum\limits_{j=r(y)-2t(y)}^{2k-t(y)}\binom{x_0}{t(y)}\binom{y}{j}\binom{z}{2k-t(y)-j}+ \sum\limits_{i=t(y+1)}^{x_0}\binom{x_0}{i}\Delta_i = \binom{x_0}{t(y)}\binom{y}{2k-t(y)}+\sum\limits_{i=t(y+1)}^{x_0}\binom{x_0}{i}\Delta_i.$$
	Therefore, to prove that  $F(x_0,y) \ge F(x_0,y+1)$, it suffices to  prove that  $\Delta_i \geq 0$ for   $t(y)+1 \leq i \leq x_0$.  
 Using equalities $\binom{z}{2k-i-j} = \binom{z-1}{2k-i-j} + \binom{z-1}{2k-i-j-1}$ (in  the first sum of Equality (\ref{eq-lem9-1}))
	 and $\binom{y+1}{j} = \binom{y}{j}+\binom{y}{j-1}$ (in the second sum of Equality (\ref{eq-lem9-1})), and cancel the term $ \sum\limits_{j=r(y)+1-2i}^{ 2k-i}\binom{y}{j}\binom{z-1}{2k-i-j}$, we have
		$$
		\Delta_i 
	 = \sum\limits_{j=r(y)+1-2i}^{2k-i}\binom{y}{j}\binom{z-1}{ 2k-1-i-j}-\sum\limits_{j=r(y)+1-2i}^{ 2k-i}\binom{y}{j-1}\binom{z-1}{ 2k-i-j}+\binom{y}{r(y)-2i}\binom{z}{ 2k+i-r(y)}.$$
	When $j=2k-i$ in the first sum, we have $\binom{z-1}{-1}=0$. Writing the second sum  in the   equality as
	$\sum\limits_{j=r(y)-2i}^{ 2k-1-i}\binom{y}{j}\binom{z-1}{ 2k-1-i-j}$,   we have
	$$ \Delta_i = \binom{y}{r(y)-2i}\big[-\binom{z-1}{ 2k+i-r(y)-1}  + \binom{z}{2k+i-r(y)}\big]= \binom{y}{r(y)-2i}\binom{z-1}{ 2k+i-r(y)} \geq 0. $$

		\bigskip
		\noindent
		{\bf Case 2.} $ 0 \leq y < \ell -2x_0 $.
			
		In this case, $2x_0+y \leq \ell$, thus we have that $2a+b \geq \max\{2x_0+y+2k+1-\ell,2k+1\}=2k+1$. Let $s(y) = \max\{\lceil \frac{ 2k+1-y}{2}\rceil,1\}$. As $2a+b \ge  2k+1$ and $b \le y$,  we have $a \geq \lceil \frac{ 2k+1-y}{2} \rceil$. We have observed already that    $a\geq 1$. So $a \ge   s(y)$. and
		$$ F(x_0,y)  = \sum\limits_{i=s(y)}^{x_0}\sum\limits_{j= 2k+1-2i}^{ 2k-i}\binom{x_0}{i}\binom{y}{j}\binom{z}{ 2k-i-j}.$$
		
		Note that $s(y+1) \leq s(y)$ and the equality holds when $y$ is odd.
		Let $\Delta   = F(x_0,y+1) -F(x_0,y)$ and for $s(y) \le i \le x_0$, let 
	\begin{equation} 
		\label{second-deltai}
		\Delta_i  =\sum\limits_{j= 2k+1-2i}^{ 2k-i}\binom{y+1}{j}\binom{z-1}{ 2k-i-j}-\sum\limits_{j= 2k+1-2i}^{ 2k-i}\binom{y}{j}\binom{z}{2k-i-j}.
	\end{equation}
		Then 
		$\Delta  \geq   \sum\limits_{i=s(y)}^{x_0}\binom{x_0}{i}\Delta_i$.	To prove that  $F(x_0,y+1) \ge F(x_0,y)$,   it suffices to prove that for each $i$, $\Delta_i \geq 0$.
Using equalities $\binom{y+1}{j} = \binom{y}{j}+\binom{y}{j-1}$ and $\binom{z}{2k-i-j}=\binom{z-1}{2k-i-j}+\binom{z-1}{2k-i-j-1}$ in Equality (\ref{second-deltai})  and cancel the term $ \sum\limits_{j=2k+1-2i}^{ 2k-i}\binom{y}{j}\binom{z-1}{2k-i-j}$,  
 	we have 
$$\Delta_i =\sum\limits_{j=2k+1-2i}^{2k-i}\binom{y}{j-1}\binom{z-1}{2k-i-j}-\sum\limits_{j=2k+1-2i}^{2k-i}\binom{y}{j}\binom{z-1}{2k-1-i-j}.$$ 
When  $j=2k-i$ in the second sum, we have $\binom{z-1}{-1}=0$. Writing the first sum in the  equality above as
$\sum\limits_{j=2k-2i}^{ 2k-1-i}\binom{y}{j}\binom{z-1}{ 2k-1-i-j}$,    we have
$$ \Delta_i = \binom{y}{2k-2i}\binom{z-1}{i-1} \geq 0. $$
\end{proof}	
	 
Now, we continue with the proof of Lemma \ref{main-lemma}.  First assume that  $x < \frac{\ell}{2}$. By Lemma \ref{monotonous}, $F(x,y) \leq  F(x,\ell-2x)$. 
So it suffices to show that $F(x,\ell-2x) \le \frac 12 \binom{\ell}{2k}-1$.
Recall that (by Equality (\ref{orginal-def-lemma}))
 $$F(x,\ell-2x)  = \sum\limits_{t=2k+1}^{4k}\sum\limits_{2a+b =t}\binom{x}{a}\binom{\ell-2x}{b}\binom{x}{2k-a-b} =\sum\limits_{t=2k+1}^{4k} C(t,x),$$ 
where  $$C(t,x) =  \sum\limits_{2a + b=t}  \binom{x}{a} \binom{\ell-2x}{b} \binom{x}{2k -a-b} = \sum\limits_{2a \leq t}  \binom{x}{a} \binom{\ell-2x}{t-2a} \binom{x}{2k +a - t}.$$ 
Then $\sum_{t=0}^{4k}C(t,x) = \binom{\ell}{2k}$. Since for any $0 \leq t \leq 2k$,  $$	C(t,x)  = \sum\limits_{2a \leq t} \binom{x}{a} \binom{\ell-2x}{t-2a} \binom{x}{2k +a - t}  =  \sum\limits_{2a' \leq 4k-t} \binom{x}{a'} \binom{\ell-2x}{4k-t-a'} \binom{x}{a'+t-2k} = C(4k -t,x), $$
where $a'=2k+a-t$. So $$F(x,\ell-2x) = \sum_{t=2k+1}^{4k}C(t,x) = \frac {\binom{\ell}{2k} - C(2k,x)}{2} \le \frac 12 \binom{\ell}{2k}-1.$$ 
Here we used the fact that   $C(2k, x)\geq 2$ when $1 \leq x < \frac{\ell}{2}$. Indeed, if $x \geq k$, 
\begin{equation*} 
	C(2k, x) \geq \binom{x}{k}^2\binom{\ell-2x}{0}+\binom{x}{k-1}^2\binom{\ell-2x}{2} \geq 1+1 = 2. 
\end{equation*}
If $1 \leq x \leq k-1$, then 
\begin{equation*} 
C(2k, x) \geq \binom{x}{x}^2\binom{\ell-2x}{2k-2x}+\binom{x}{x-1}^2\binom{\ell-2x}{2k-2x+2} \geq 1+1 =2.
\end{equation*} 

Now we assume that $x \geq \frac{\ell}{2}$. It follows that  $y \geq 0\geq  \ell-2x$. Hence, by the first part of Lemma \ref{monotonous}, $F(x,y) \leq F(x,0)$. So it suffices to prove that $F(x,0) \leq \frac{1}{2}\binom{\ell}{2k}-1$.
	
	 Note that in this case,  $b=y=0$ and $2x+y+2k+1-\ell \geq  2k+1$, so $2a+b=2a \geq 2x+y+2k+1-\ell = 2x+2k+1-\ell$, which implies that $a \geq x+k-\ell'+1$, thus
	\begin{equation*} 
	F(x,0) = \sum\limits_{i=x+k-\ell'+1}^{2k}\binom{x}{i}\binom{\ell-x}{2k-i}. 
	\end{equation*}
We first prove that when $x \geq \frac{\ell}{2}=\ell'$, $F(x,0)> F(x+1,0)$. Let $\Delta  = F(x,0) -F(x+1,0)$, then
\begin{equation}
\label{delta3}
	\Delta=\sum\limits_{i=x+1+k-\ell'}^{2k}\binom{x}{i}\binom{\ell-x}{2k-i}-\sum\limits_{i=x+2+k-\ell'}^{2k}\binom{x+1}{i}\binom{\ell-1-x}{2k-i}.
\end{equation}

Using equalities $\binom{x+1}{i} = \binom{x}{i}+\binom{x}{i-1}$ and $\binom{\ell-x}{2k-i}=\binom{\ell-x-1}{2k-i}+\binom{\ell-x-1}{2k-i-1}$,  and cancel the term $ \sum\limits_{j=x+k+2-\ell'}^{2k-1}\binom{x}{i}\binom{\ell-x-1}{2k-i}$, we have

$$\Delta =\binom{x}{x+1+k-\ell'}\binom{\ell-x}{k+\ell'-x-1}+\sum\limits_{i=x+2+k-\ell'}^{ 2k}\binom{x}{i}\binom{\ell-1-x}{2k-1-i}  - \sum\limits_{i=x+2+k-\ell'}^{2k}\binom{x}{i-1}\binom{\ell-1-x}{2k-i}.$$
When $i=2k$ in the first sum above, we have $\binom{\ell-1-x}{-1}=0$. Writing the last sum in the  equality above as $\sum\limits_{i=x+1+k-\ell'}^{2k-1}\binom{x}{i}\binom{\ell-1-x}{2k-1-i}$,   we have 
\begin{eqnarray*}
\Delta  &=& \binom{x}{x+1+k-\ell'}\binom{\ell-x}{k+\ell'-x-1} - \binom{x}{x+1+k-\ell'}\binom{\ell-x-1}{k+\ell'-x-2} \\
&=&  \binom{x}{x+1+k-\ell'}\binom{\ell-x-1}{k+\ell'-x-1} \geq 0.
\end{eqnarray*}

So, $F(x,y) \leq F(\ell',0)$. Note that $
	\sum\limits_{i=k+1}^{2k}\binom{\ell'}{i}\binom{\ell'}{2k-i} = \sum\limits_{i=1}^{k-1}\binom{\ell'}{2k-i}\binom{\ell'}{i}$
	and 
	\begin{equation*} 
	\binom{\ell}{2k}  =\sum\limits_{i=k+1}^{2k}\binom{\ell'}{i}\binom{\ell'}{2k-i}+ \sum\limits_{i=1}^{k-1}\binom{\ell'}{2k-i}\binom{\ell'}{i}+\binom{\ell'}{k}^2.
\end{equation*}	
	As $\ell'=\frac{\ell}{2} \geq k+1 \geq 2$, we have    
	\begin{equation*}
	F(x,y) \leq F(\ell',0)
	= \sum\limits_{i=k+1}^{2k}\binom{\ell'}{i}\binom{\ell'}{2k-i}  =  \frac{\binom{\ell}{2k}-\binom{\ell'}{k}^2}{2} < \frac{1}{2}\binom{\ell}{2k}-1.
	\end{equation*}	
	
	 This completes the proof of Lemma \ref{main-lemma}. 

\end{document}